\documentclass[11pt,a4paper]{amsart}
\usepackage{enumerate}
\usepackage{amsfonts,amssymb}
\usepackage{amsthm,amsmath,amstext,xfrac}

\usepackage{cancel,xspace}
\usepackage{epsfig,fancybox}
\usepackage{graphicx,mathrsfs,ifpdf}
\usepackage{setspace}

\newtheorem{theorem}{Theorem}[section]
\newtheorem{lemma}[theorem]{Lemma}

\newtheorem{proposition}[theorem]{Proposition}

\newtheorem{problem}[theorem]{Problem}
\newtheorem{conjecture}[theorem]{Conjecture}

\theoremstyle{definition}

\theoremstyle{theorem}

\numberwithin{equation}{section}
\numberwithin{figure}{section}

\newcommand\Set[2] {\left\{{#1}:\,{#2}\right\}}
\newcommand\Setx[1] {\left\{{#1}\right\}}
\newcommand{\cp}[2]{({#1},{#2})}
\newcommand{\cpx}[2]{{#1},{#2}}

\newcommand{\RR}{\mathbb R}
\newcommand{\ZZ}{\mathbb Z}

\DeclareMathOperator{\sd}{sd}
\DeclareMathOperator{\CN}{CN}

\DeclareMathOperator{\id}{id}
\DeclareMathOperator{\Img}{Im}

\DeclareMathOperator{\coind}{coind}



\newcommand{\fig}[1]{\includegraphics[page=#1]{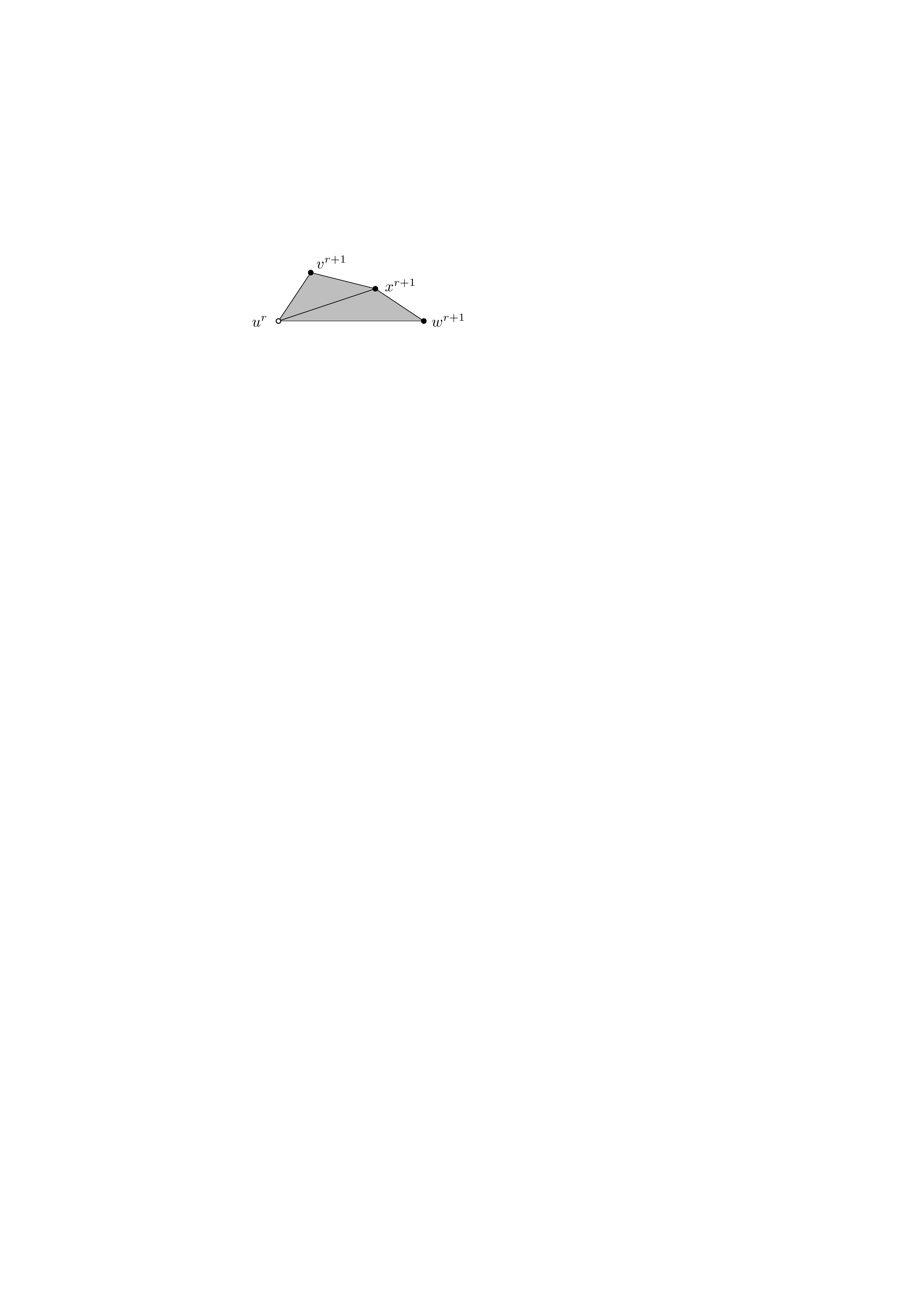}}%
\newcommand{\hf}{\hspace{0mm}\hspace*{\fill}\hspace{0mm}}
\newcommand{\claimproofend}{\hspace*{.1mm}\hspace{\fill}}

\begin{document}
\title{Colouring quadrangulations of projective spaces}
\author{Tom\'a\v s Kaiser}\thanks{The Institute for Theoretical
  Computer Science (CE-ITI) is supported by project P202/12/G061 of
  the Czech Science Foundation.}

\address{Department of Mathematics, Institute for Theoretical Computer
  Science (CE-ITI), and European Centre of Excellence NTIS (New
  Technologies for the Information Society), University of West
  Bohemia, Univerzitn\'i 8, 306 14 Plze\v n, Czech Republic}
\email{kaisert@kma.zcu.cz}
\author{Mat\v ej Stehl\'ik}
\thanks{The second author was partially supported by ANR project Stint
  under reference ANR-13-BS02-0007 and by the LabEx PERSYVAL-Lab (ANR--11-LABX-0025)}
\address{UJF-Grenoble 1 / CNRS / Grenoble-INP, G-SCOP UMR5272 Grenoble, F-38031, France}
\email{matej.stehlik@g-scop.inpg.fr}

\begin{abstract}
  A graph embedded in a surface with all faces of size $4$ is known as
  a quadrangulation. We extend the definition of quadrangulation to
  higher dimensions, and prove that any graph $G$ which embeds as a
  quadrangulation in the real projective space $P^n$ has chromatic
  number $n+2$ or higher, unless $G$ is bipartite. For $n=2$ this
  was proved by Youngs [J. Graph Theory 21 (1996), 219--227]. The
  family of quadrangulations of projective spaces includes all
  complete graphs, all Mycielski graphs, and certain graphs homomorphic to
  Schrijver graphs. As a corollary, we obtain a new proof of the
  Lov\'asz--Kneser theorem.
\end{abstract}
\maketitle

\section{Introduction}
\label{sec:introduction}

A graph which embeds in the real projective plane $P^2$ so that every
face is bounded by a walk of length $4$ is called a ($2$-dimensional) \emph{projective
  quadrangulation}. A remarkable result of Youngs~\cite{You96} asserts
that the chromatic number of a projective quadrangulation is either
$2$ or $4$. On the last page of \cite{You96}, Youngs notes:
\begin{quote}\small
  \dots it would be equally worthwhile to increase the chromatic number [of the
  graphs in question]. A possible step in this direction is to jump
  from a ($2$-dimensional) projective plane to a higher dimensional
  projective space. This may not be a fruitful path to follow, and the
  only evidence the author can suggest in its favor is that the
  $5$-chromatic Mycielski graphs embed pleasantly in projective $3$-space
  in a similar fashion to their $4$-chromatic counterparts in $2$-space.
\end{quote}

In this paper, we show that Youngs' intuition was correct as we extend
the lower bound in his theorem to the $n$-dimensional real projective space
$P^n$. To do so, we extend the notion of quadrangulation to higher dimensions
as follows (for definitions, see Section~\ref{sec:terminology}).

Let $K$ be a generalised simplicial complex (there may be more than one simplex
with the same set of vertices, unlike in the usual simplicial complex). A \emph{quadrangulation}
of $K$ is a spanning subgraph $G$ of its $1$-skeleton $K^{(1)}$ such that every
(inclusionwise) maximal simplex of $K$ induces a complete bipartite
subgraph of $G$ with at least one edge. If the polyhedron of $K$ is
homeomorphic to a topological space $X$, we say that the natural
embedding of $G$ in $X$ is a quadrangulation of $X$. 

Note that if $K$ triangulates the projective plane, then a
quadrangulation of $K$ is a projective quadrangulation according to
the usual definition recalled at the beginning of this
section. Conversely, given a projective quadrangulation $H$, we can
triangulate its faces and obtain $H$ as a quadrangulation of the
resulting generalised simplicial complex. More precisely, this is true
if none of the faces of $H$ contains a crosscap; otherwise, two edges
of $H$ will be doubled in the process. However, this difference
between the two definitions is unimportant as long as we are
interested in vertex colouring.

Our main result is the following generalisation of the lower bound of
Youngs.
\begin{theorem}
\label{thm:main}
  If $G$ is a non-bipartite quadrangulation of the $n$-dimensional
  projective space $P^n$, then $\chi(G) \geq n+2$.
\end{theorem}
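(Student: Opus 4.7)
My approach is topological. Let $\pi: \tilde K \to K$ denote the $2$-fold universal cover of $K$, so that $|\tilde K| \cong S^n$ carries the antipodal free $\mathbb{Z}_2$-action, and write $\tilde G$ for the pullback of $G$. I aim to exhibit a $\mathbb{Z}_2$-equivariant continuous map $|\tilde K| \to |B(G)|$, where $B(G)$ is the Lov\'asz box complex with its standard involution swapping the two sides of a simplex; the topological lower bound $\chi(G) \geq \coind_{\mathbb{Z}_2}|B(G)| + 2$ then yields the conclusion.

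\textbf{Key lemma and main obstacle.} The main technical step is to produce an antipodal function $\phi: V(\tilde K) \to \mathbb{Z}_2$ (so $\phi(-v) = \phi(v) + 1$) that is simultaneously a proper $2$-colouring of $\tilde G$. I would define the $1$-cochain $\epsilon \in C^1(\tilde K; \mathbb{Z}_2)$ by $\epsilon(e) = 1$ iff $e \in E(\tilde G)$, and observe: for every $2$-simplex $t$ of $\tilde K$, picking a maximal simplex $\tilde\sigma \supseteq t$ with bipartition $(\tilde A, \tilde B)$ gives $\tilde G[t] = K_{|t \cap \tilde A|, |t \cap \tilde B|}$, which has either $0$ or $2$ edges. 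Hence $\delta\epsilon = 0$, and since $H^1(S^n; \mathbb{Z}_2) = 0$ for $n \geq 2$, we have $\epsilon = \delta\phi$ for some $0$-cochain $\phi$ --- which is automatically a proper $2$-colouring of $\tilde G$. Now $\psi(v) := \phi(v) + \phi(-v)$ satisfies $\delta\psi = 0$ because the $\mathbb{Z}_2$-action preserves $\tilde G$, so $\psi$ is a $0$-cocycle, hence constant on the connected space $|\tilde K|$. If that constant were $0$, then $\phi$ would descend to a proper $2$-colouring of $G$, contradicting the hypothesis; so $\psi \equiv 1$ and $\phi$ is antipodal. The case $n = 1$ is immediate: $G$ is then an odd cycle and $\tilde G$ its even double cover.

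\textbf{Building the map.} Given the antipodal proper $2$-colouring $\phi$ of $\tilde G$, for each simplex $\tilde\tau$ of $\tilde K$ set $\tilde\tau_\pm := \{v \in \tilde\tau : \phi(v) = \pm\}$ and define $f(\tilde\tau) := (\pi(\tilde\tau_+), \pi(\tilde\tau_-))$. Any maximal simplex containing $\tilde\tau$ has its bipartition matching $\phi|_{\tilde\tau}$, so $\pi(\tilde\tau_+)$ and $\pi(\tilde\tau_-)$ are disjoint and span a complete bipartite subgraph of $G$; thus $f(\tilde\tau)$ is a simplex of $B(G)$, and $f$ is order-preserving on the face poset. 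Antipodality of $\phi$ gives $f(-\tilde\tau) = (\pi(\tilde\tau_-), \pi(\tilde\tau_+))$, i.e.\ the swap of $f(\tilde\tau)$. Passing to geometric realisations then produces the desired $\mathbb{Z}_2$-equivariant map $S^n \to |B(G)|$, and Lov\'asz's inequality finishes the proof.
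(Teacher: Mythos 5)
Your proof is correct, and the overall strategy coincides with the paper's: pass to the simplicial double cover $\tilde K$ with $\|\tilde K\|\cong S^n$, obtain an antipodal proper $2$-colouring of the pullback graph $\tilde G$, use it to build a simplicial $\mathbb Z_2$-map $S^n\to\|B(G)\|$, and conclude with Lov\'asz's inequality $\chi(G)\geq\coind(B(G))+2$ (Theorem~\ref{thm:lovasz}). The one genuine variation is in how the antipodal colouring is obtained. The paper gets there via Lemma~\ref{lem:parity} and Lemma~\ref{l:quad-ball}(a)$\Rightarrow$(b): $0$-homologous cycles in a quadrangulation are even (each triangle carries $0$ or $2$ $G$-edges), hence $\tilde G$ is bipartite since all cycles in $S^n$ bound, and then one checks that a bipartition of $\tilde G$ arises as the associated graph of a proper antisymmetric $2$-colouring of $T$. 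You use the Poincar\'e-dual cohomological packaging of the same fact: the indicator $1$-cochain $\epsilon$ of $E(\tilde G)$ is a cocycle by the same ``$0$ or $2$'' observation, so $H^1(S^n;\mathbb Z_2)=0$ for $n\geq 2$ gives $\epsilon=\delta\phi$, and $\epsilon=\delta\phi$ at once records that $\tilde G$ is precisely the $\phi$-bichromatic subgraph---a point the paper's version has to argue separately. Your antisymmetry step (the $0$-cocycle $\psi(v)=\phi(v)+\phi(-v)$ is constant, and constant $0$ would make $G$ bipartite) matches the paper's in substance. Both routes rely on the same topological input, and both must treat $n=1$ separately since $\pi_1(P^1)\neq\mathbb Z_2$ (equivalently, $H^1(S^1;\mathbb Z_2)\neq 0$); the cohomological phrasing is simply a bit tighter.
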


We show that the family of quadrangulations of projective spaces
includes all complete graphs and all (generalised) Mycielski graphs.
We also prove the following result about the Schrijver graph
$SG(n,k)$. (Recall that a graph $G$ is \emph{homomorphic} to a graph
$H$ if there exists a mapping $f:V(G)\to V(H)$ such that
$f(u)f(v) \in E(H)$ whenever $uv \in E(G)$; note that, in this case,
$\chi(G) \leq \chi(H)$.)
\begin{theorem}\label{thm:schrijver}
  Let $n > 2k$ and $k \geq 1$. There exists a non-bipartite
  quadrangulation of $P^{n-2k}$ that is homomorphic to $SG(n,k)$.
\end{theorem}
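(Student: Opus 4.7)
The plan is to construct, for every $n > 2k$ with $k \geq 1$, an explicit generalised simplicial complex $K$ whose polyhedron is $P^{n-2k}$, together with a spanning subgraph $G \subseteq K^{(1)}$ that is a quadrangulation of $K$, is non-bipartite, and is a subgraph of $SG(n,k)$. The main device is a $\mathbb{Z}/2$-symmetric double cover $\tilde K \to K$ whose vertices are \emph{signed} stable $k$-subsets.

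First, I would build a generalised simplicial complex $\tilde K$ on vertex set $\{(A,\epsilon) : A\subseteq[n]\text{ stable, } |A|=k,\ \epsilon\in\{+,-\}\}$ whose polyhedron triangulates $S^{n-2k}$ and on which the sign-flip involution $\sigma\colon(A,\epsilon)\mapsto(A,-\epsilon)$ acts freely and simplicially. The key design requirement is that each maximal simplex of $\tilde K$ splits by sign into a non-empty positive part $P$ and a non-empty negative part $N$ with $A\cap B = \emptyset$ for every $A\in P$ and $B\in N$. A natural candidate is a box-complex-like construction adapted to the cyclic ordering of $[n]$, in which the facets are encoded by Gale-dual-type sign patterns; the evenness-type conditions arising there produce the required disjointness, and the topological model (as in the B\'ar\'any and Sarkaria proofs of the Kneser--Lov\'asz theorem) yields an $(n-2k)$-sphere. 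Setting $K := \tilde K/\langle\sigma\rangle$, the polyhedron of $K$ is $P^{n-2k}$, and its vertices may be identified with unsigned stable $k$-subsets of $[n]$.

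Second, I would define $G$ as the spanning subgraph of $K^{(1)}$ whose edges are exactly those $\{A,B\}$ that lift to mixed-sign pairs $\{(A,+),(B,-)\}$ with $A\cap B=\emptyset$. Since such a pair $\{A,B\}$ is an edge of $SG(n,k)$, the inclusion $V(G) \hookrightarrow V(SG(n,k))$ is a graph homomorphism. The quadrangulation property is then immediate: each maximal simplex of $K$ lifts to a maximal simplex of $\tilde K$ with sign-split $P\sqcup N$, and the edges of $G$ inside that simplex are exactly the (non-empty) complete bipartite graph between $P$ and $N$. Non-bipartiteness follows because a proper $2$-colouring of $G$ would lift to a $\mathbb{Z}/2$-equivariant map $\tilde K\to S^0$, contradicting $\coind_{\mathbb{Z}/2}\tilde K = n-2k\geq 1$; equivalently, an odd cycle can be exhibited by tracing a lift of any path in $\tilde K$ joining a vertex to its antipode.

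The critical obstacle is the first step: pinning down a concrete $\tilde K$ which is simultaneously a PL $(n-2k)$-sphere carrying the free antipodal action $\sigma$ and whose maximal simplices are exactly the signed bicliques of $SG(n,k)$ described above. Once $\tilde K$ is specified, verifying the quadrangulation condition, the homomorphism property, and non-bipartiteness reduce to routine combinatorial checks together with a short application of Borsuk--Ulam. As a sanity check, for $k=1$ the construction should reduce to the identification $SG(n,1) = K_n$ as a quadrangulation of $P^{n-2}$, consistent with the earlier claim in the paper that all complete graphs arise as quadrangulations of projective spaces.
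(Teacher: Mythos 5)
Your approach is genuinely different from the paper's, but it contains a fatal gap precisely at the step you yourself flag as the ``critical obstacle.'' The paper's proof is \emph{inductive}: it uses the generalised Mycielski operator $M_k(\cdot)$. Specifically, Lemma~\ref{l:homo} exhibits an explicit homomorphism $M_k(SG(n-1,k)) \to SG(n,k)$; iterating this (and using that a homomorphism $H \to H'$ induces one $M_k(H) \to M_k(H')$) shows that the $(n-2k-1)$-fold iterated Mycielskian of the odd cycle $SG(2k+1,k)$ maps homomorphically into $SG(n,k)$. Theorem~\ref{thm:mycielski}, which builds the suspension-like complex $L$ from a triangulation of $S^n$, then guarantees that this iterated Mycielskian is a non-bipartite quadrangulation of $P^{n-2k}$. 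Note that the paper only produces a quadrangulation \emph{homomorphic to} $SG(n,k)$; it does not claim the quadrangulation is a subgraph of $SG(n,k)$.

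Your proposal, by contrast, attempts to build a quadrangulation directly \emph{inside} $SG(n,k)$: a $\mathbb{Z}_2$-symmetric triangulation $\tilde K$ of $S^{n-2k}$ on signed stable $k$-sets, whose maximal simplices are signed bicliques of $SG(n,k)$, with the quadrangulation arising from the quotient. If this could be carried out it would establish not merely Theorem~\ref{thm:schrijver} but the strictly stronger statement that $SG(n,k)$ contains a quadrangulation of $P^{n-2k}$ as a spanning subgraph --- which is precisely the open Conjecture stated in Section~\ref{sec:conclusion} of the paper. The paper's authors explicitly leave this open, which should be a warning sign. Your sketch never actually produces $\tilde K$: phrases like ``box-complex-like construction adapted to the cyclic ordering'' and ``Gale-dual-type sign patterns'' gesture at B\'ar\'any/Sarkaria-style arguments, but those arguments produce maps or covers of spheres, not a specific simplicial complex on exactly these vertices with exactly these facets, a free simplicial involution, and the topological type of $S^{n-2k}$. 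Without that object in hand, none of the subsequent verifications (quadrangulation condition, homomorphism, non-bipartiteness) can be carried out, so the proof does not go through. As a concrete stumbling block: even for $k=1$, where $SG(n,1)=K_n$, the paper's Theorem~\ref{thm:upper} and its proof show that getting $K_n$ as a quadrangulation of $P^{n-2}$ already requires a nontrivial handmade construction (the cylinder complex of Theorem~\ref{t:unbounded} plus suspensions), not a generic ``sign-split'' complex.
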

Since the Schrijver graph $SG(n,k)$ is a subgraph of the Kneser graph
$KG(n,k)$, Theorems~\ref{thm:main} and~\ref{thm:schrijver}
give an alternative proof of the Lov\'asz--Kneser Theorem~\cite{Lov78},
namely $\chi(KG(n,k)) \geq n-2k+2$.

It may come as a surprise that the chromatic number of
quadrangulations of $P^n$ cannot be bounded from above for any $n>2$,
as the next theorem shows.
\begin{theorem}
\label{thm:upper}
  For $n \geq 3$ and $t\geq 5$, the complete graph $K_t$ embeds in
  $P^n$ as a quadrangulation if $t-n$ is even.
\end{theorem}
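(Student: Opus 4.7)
The approach is constructive. The key observation is that for $G=K_t$, the induced subgraph on any vertex set $S$ is the complete graph $K_{|S|}$, which is complete bipartite with at least one edge if and only if $|S|=2$. So every maximal simplex of the underlying generalised complex $K$ must have exactly two distinct vertices: its $n+1$ corners map onto a pair $\{u,v\}$ that forms an edge of $K_t$. Constructing $K_t$ as a quadrangulation of $P^n$ therefore amounts to producing a cell structure on $P^n$ with $t$ vertices in which every top-dimensional cell has precisely two underlying vertices and every pair of vertices of $K_t$ appears together in at least one such cell.

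\medskip
\noindent\textbf{Base cases.} For each $n\geq 3$, I would exhibit $K_{n+2}$ as a quadrangulation of $P^n$ directly. A promising route is to build a $\ZZ_2$-equivariant cell decomposition of $S^n$ with $2(n+2)$ vertices forming $n+2$ antipodal pairs, in which every top cell uses vertices from exactly two antipodal pairs and every two antipodal pairs co-occur in some top cell. Taking the antipodal quotient then yields the desired quadrangulation of $P^n$ on $n+2$ vertices with $1$-skeleton containing $K_{n+2}$.

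\medskip
\noindent\textbf{Inductive step $(t,n)\mapsto(t+2,n)$.} Starting from $K_t$ as a quadrangulation of $P^n$, I would introduce two new vertices $x,y$ by replacing a coordinated collection of existing top cells by new bi-vertex cells involving $x$ or $y$, so that the polyhedron remains $P^n$ and $x,y$ become adjacent, in the new $1$-skeleton, to each other and to every previously existing vertex. Iterating this operation together with the base cases gives $K_t$ as a quadrangulation of $P^n$ for every admissible pair $(t,n)$. The parity condition $t-n\equiv 0\pmod 2$ is built in because each step adds exactly two vertices and the base case has $t-n=2$.

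\medskip
\noindent\textbf{Main obstacle.} I expect the inductive step to be the main difficulty. Because the two new vertices must become adjacent to \emph{every} existing vertex, the modification cannot be confined to a single top cell; a coordinated global scheme for inserting $x$ and $y$ into the cell structure is required. Along with this, one has to simultaneously verify that (a) every new top cell still has exactly two distinct vertices forming an edge of $K_{t+2}$, and (b) the underlying polyhedron remains homeomorphic to $P^n$. A milder version of the same issue appears already in the base case, where one must write down an equivariant decomposition of $S^n$ with the claimed bi-vertex property and check that its antipodal quotient really is $P^n$.
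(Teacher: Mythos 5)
Your ``key observation'' --- that every maximal simplex of $K$ must have exactly two distinct vertices --- is the root of the problem, and it should have set off an alarm: a top $n$-simplex of any triangulation of $P^n$ has $n+1$ corners, and since a collapsed edge would become a loop (which the paper excludes), all $n+1$ of its vertices are distinct. So for $n\geq 2$ no maximal simplex can have just two vertices, and under your reading no complete graph could ever quadrangulate $P^n$, contradicting the very theorem you are trying to prove. What has gone wrong is an overly literal reading of ``induces.'' In the paper's framework $K^{(1)}$ may carry several $1$-cells between the same pair of vertices, $G$ selects a subset of these cells, and the quadrangulation condition asks that the $1$-cells \emph{of the simplex $\sigma$} belonging to $G$ form a complete bipartite graph on $V(\sigma)$ with at least one edge. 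It is perfectly possible that two vertices $u,v\in V(\sigma)$ are adjacent in $G$ via a $1$-cell lying in a \emph{different} simplex of $K$, while the $1$-cell of $\sigma$ joining $u$ and $v$ is not in $G$; this is precisely what happens in the paper's construction, whose top $3$-simplices have four distinct vertices with internal $G$-edges forming $K_{2,2}$ (see the pairs $[x_0,x_1,y_{r+j},y_{r+j+1}]$ in Theorem~\ref{t:unbounded}). Consequently your base case (a cell structure with $n+2$ antipodal pairs in which each top cell of $S^n$ touches only two pairs) cannot exist, and the $(t,n)\mapsto(t+2,n)$ step would have to be rebuilt on a correct notion of quadrangulation.

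It is also worth noting that the paper runs its induction in the orthogonal direction. It first gives an explicit boundary-symmetric triangulation of $B^3$ (cylinder-based) realising $K_{2r+3}$ as a quadrangulation of $P^3$ (Theorem~\ref{t:unbounded}); this is the ``add two vertices, fixed $n$'' content. Then to pass from $P^3$ to $P^n$ it takes the $(n-3)$-fold suspension (join with $S^0$, one black and one white vertex per step) of the corresponding $S^3$ triangulation, which adds exactly one vertex per extra dimension and preserves the quadrangulation property by Lemma~\ref{l:quad-ball}(b). This is a clean, purely topological mechanism and avoids the global re-cellulation of $P^n$ that your inductive step would require. If you want to pursue your plan, you would at minimum need (i) the corrected reading of the definition, (ii) an explicit construction rather than a ``promising route'' for the base case, and (iii) a concrete description of the coordinated modification inserting two new vertices --- which, as you yourself note, cannot be local and is really where all the work lies.
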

However, we show that sufficiently `fine' non-bipartite quadrangulations
of $P^n$ are $(n+2)$-chromatic.

The rest of the paper is organised as follows. In
Section~\ref{sec:terminology} we introduce the basic terminology and
preliminary results. In Section~\ref{sec:quadrangulations} we prove
two simple lemmas about quadrangulations which will be used later.
In Section~\ref{sec:lower} we prove
Theorem~\ref{thm:main}, and briefly discuss bounds on variants of the
chromatic number. In Section~\ref{sec:upper}, we prove
Theorem~\ref{thm:upper} and provide a geometric sufficient condition
for $(n+2)$-colourability. In Section~\ref{sec:mycielski} we show how
complete graphs and Mycielski graphs embed as quadrangulations in $P^n$,
and use it to prove Theorem~\ref{thm:schrijver}. We conclude by
presenting a conjecture and two open problems in
Section~\ref{sec:conclusion}.

\section{Topological preliminaries}
\label{sec:terminology}

Our graph theory terminology and notation is standard, and follows Bondy
and Murty~\cite{BonMur08}. All graphs considered are simple, that is, have
no loops and multiple edges. The vertex and edge set of a graph $G$ is
denoted by $V(G)$ and $E(G)$, respectively.

For a comprehensive account of topological methods in combinatorics and
graph theory, we refer the reader to Matou\v{s}ek~\cite{Mat03} or
Kozlov~\cite{Koz08}. For an introduction to algebraic topology, see
Hatcher~\cite{Hat02} or Munkres~\cite{Mun84}.

We will deal with several different kinds of simplicial complexes. By
default, our complexes are generalised simplicial
complexes~\cite[Section~2.2]{Koz08} (also known as regular
$\Delta$-complexes~\cite{Hat02} or simplicial cell complexes). A
topological space $K$ (a subspace of some Euclidean space $\RR^N$) is
a \emph{generalised simplicial complex} if it can be constructed
inductively using the following `gluing process'. We start with a
discrete point space $K^{(0)}$ in $\RR^N$, and at each step $i>0$ we
inductively construct the space $K^{(i)}$ by attaching a set of
$i$-dimensional simplices to $K^{(i-1)}$. We call the images of the
simplices involved in the construction the \emph{faces} or
\emph{cells} of $K$. Each simplex is attached via a gluing map
$f:\partial \Delta_i \to K^{(i-1)}$ that maps the interior of each
face of the boundary of the standard $i$-simplex $\Delta_i$ in
$\RR^{i+1}$ homeomorphically to the interior of a face of $K^{(i-1)}$
of the same dimension. For each $i$, the set $K^{(i)}$ is called the
\emph{$i$-skeleton} of $K$. The set of \emph{vertices} $K^{(0)}$ is
also denoted by $V(K)$. All the generalised simplicial complexes in
this paper have a finite number of faces.

The \emph{polyhedron} $\|K\|$ of a generalised simplicial complex $K$
is defined as the union of all of its cells (in $\RR^N$). We say that
$K$ \emph{triangulates} the space $\|K\|$ or any space homeomorphic to
it. All triangulations will be generalised simplicial complexes unless
otherwise noted.

A generalised simplicial complex $K$ is a \emph{geometric simplicial
  complex} if the embedding of each face is a linear map (a linear
extension of the embedding of $K^{(0)}$).

An \emph{abstract simplicial complex} is a non-empty hereditary set
system. Given a generalised simplicial complex $K$, a natural way to
assign an abstract simplicial complex to it is as follows. Let $A(K)$
be the multiset of the vertex sets of all the faces of $K$. If $A(K)$
is actually a set (that is, all the faces have distinct vertex sets),
then it is an abstract simplicial complex and we say that $K$ is a
\emph{realisation} of $A(K)$ (or a \emph{geometric realisation} if $K$
is a geometric simplicial complex). Furthermore, we say that $\|K\|$
is the \emph{polyhedron} of $A(K)$. It is well known that every finite
abstract simplicial complex has a geometric realisation, and the
polyhedra of all of its realisations are homeomorphic.

If $K$ is a generalised simplicial complex in $\RR^N$ such that for
any face $\sigma$ of $K$, its central reflection $-\sigma$ is also a
face of $K$, then we say that $K$ is an \emph{(antipodally) symmetric
  triangulation} of $\|K\|$. Of particular importance for us will be
symmetric triangulations of the unit sphere $S^n$. Furthermore, if $K$
triangulates the unit $n$-ball $B^n$ and the subcomplex corresponding
to the boundary $\partial B^n=S^{n-1}$ is a symmetric triangulation of
$S^{n-1}$, then we say that $K$ is a \emph{boundary-symmetric}
triangulation of $B^n$.

The proof of Theorem~\ref{thm:main} relies on a special type of
abstract simplicial complex associated to a graph, which we shall now
define. Given a graph $G$, the set of \emph{common neighbours} of a
set $A \subseteq V(G)$ is defined as
\[
\CN(A)=\Set{v\in V(G)}{\{a,v\}\in E(G) \text{ for all } a\in A}.
\]
The \emph{box complex} of a graph $G$ without isolated vertices is the
simplicial complex with vertex set $V(G) \times \Setx{1,2}$, defined as
\[
B(G)=\Set{A_1 \uplus A_2}{A_1,A_2 \subseteq V(G),
A_1 \subseteq \CN(A_2)\neq \emptyset,
A_2 \subseteq \CN(A_1)\neq \emptyset},
\]
where we use the notation $A \uplus B$ for the set
$(A\times\Setx{1})\cup (B\times \Setx{2})$.

Let $K$ be a generalised simplicial complex and $p$ a non-negative integer. Restricting
to $\ZZ_2$ coefficients, recall that a \emph{$p$-chain} of $K$ is a
(finite) formal sum of some of the $p$-simplices of $K$, and the group
of $p$-chains of $K$ is denoted by $C_p(K,\mathbb Z_2)$. The
\emph{boundary} of a $p$-chain $c$ is denoted by $\partial_p(c)$. The
group of \emph{$p$-cycles} is defined as $Z_p(K,\mathbb
Z_2)=\ker \partial_p$ and the group of \emph{$p$-boundaries} of
$C_p(K,\mathbb Z_2)$ as $B_p(K,\mathbb Z_2)=\Img \partial_p$. The
\emph{$p$-th homology} group is the quotient $H_p(K,\mathbb
Z_2)=Z_p(K,\mathbb Z_2)/B_p(K,\mathbb Z_2)$. Two $p$-cycles $c_1, c_2
\in Z_p(K,\mathbb Z_2)$ are \emph{homologous} if they are in the same
class of $H_p(K,\mathbb Z_2)$, i.e., if there exists a $(p+1)$-chain
$d$ such that $c_1+c_2=\partial_{p+1}(d)$.

A homeomorphism $\nu:X \to
X$ is called a \emph{$\mathbb Z_2$-action} on $X$ if $\nu^2 = \nu \circ
\nu=\id_X$. The $\mathbb Z_2$-action $\nu$ is \emph{free} if it has no
fixed points. A topological space $X$ equipped with a (free)
$\mathbb Z_2$-action $\nu$ is a \emph{(free) $\mathbb Z_2$-space}.
A canonical example of a free $\mathbb Z_2$-space is $(S^n,\nu)$, where
$S^n$ is the unit $n$-sphere and $\nu$ is the \emph{antipodal action}
given by $\nu:x \mapsto -x$. The box complex is equipped with
a natural free $\mathbb Z_2$-action $\nu$ which interchanges the two
copies of $V(G)$, namely $\nu:(v,1) \mapsto (v,2)$ and $\nu:(v,2)
\mapsto (v,1)$.
Given $\mathbb Z_2$-spaces $(X,\nu)$ and $(Y,\omega)$, a continuous map
$f:X \to Y$ such that $f \circ \nu = \omega \circ f$ is known
as a \emph{$\mathbb Z_2$-map}. If there exists a $\mathbb Z_2$-map from
$X$ to $Y$, we write $X \xrightarrow{\mathbb Z_2} Y$. The \emph{$\mathbb
Z_2$-coindex} of $X$ is defined as
\[
\coind(X)=\max \Set{n \geq 0}{S^n \xrightarrow{\mathbb Z_2} X}.
\]
When $K$ is a generalised simplicial complex, we write
$\coind(K)$ instead of $\coind(\|K\|)$.

The main tool in the proof of Theorem~\ref{thm:main} is the following
inequality, which may be traced to Lov\'asz's seminal paper~\cite{Lov78}
(where it was stated in terms of the connectivity of the neighbourhood
complex). Its proof relies on the Borsuk-Ulam theorem~\cite{Bor33}. (See
also Theorem~5.9.3 in~\cite{Mat03} and the discussion on page~99 therein.)

\begin{theorem}
  \label{thm:lovasz}
  If $G$ is a graph without isolated vertices, then $\chi(G) \geq \coind(B(G))+2$.
\end{theorem}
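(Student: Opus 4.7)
The plan is to prove the contrapositive: given a proper colouring $f\colon V(G) \to \{1, \ldots, c\}$ with $c = \chi(G)$, I would construct a $\mathbb{Z}_2$-map $\|B(G)\| \to S^{c-2}$. Composing with any $\mathbb{Z}_2$-map $S^n \to \|B(G)\|$ then yields $S^n \xrightarrow{\mathbb{Z}_2} S^{c-2}$, so the Borsuk--Ulam theorem forces $n \leq c-2$, whence $\coind(B(G)) \leq \chi(G) - 2$, as required.

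First I would define $\phi\colon \|B(G)\| \to \mathbb{R}^c$ on vertices by $\phi(v,1) = e_{f(v)}$ and $\phi(v,2) = -e_{f(v)}$ (where $e_1,\ldots,e_c$ is the standard basis) and extend it affinely in barycentric coordinates on each simplex. The crucial observation is that for any simplex $A_1 \uplus A_2 \in B(G)$ with both parts nonempty, $A_1$ and $A_2$ induce a complete bipartite subgraph of $G$, so $f(A_1) \cap f(A_2) = \emptyset$; the positive and negative contributions to $\phi(p)$ therefore occupy disjoint coordinates, and $\phi(p)$ lies in the boundary of the cross-polytope $\{x \in \mathbb{R}^c : \sum_i |x_i| = 1\}$, which under the antipodal action is $\mathbb{Z}_2$-homeomorphic to $S^{c-1}$. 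By construction $\phi$ is $\mathbb{Z}_2$-equivariant.

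Next I would rule out the antipodal pair $\pm q$, where $q = \tfrac{1}{c}(1,\ldots,1)$, from the image of $\phi$. If $\phi(p) = q$, then every coordinate of $\phi(p)$ is strictly positive, so $p$ lies in the relative interior of a simplex of the form $A_1 \uplus \emptyset$ with $f(A_1) = \{1,\ldots,c\}$. Membership of $A_1 \uplus \emptyset$ in $B(G)$ demands $\CN(A_1) \neq \emptyset$, so some vertex $w$ is adjacent to every element of $A_1$; since $f$ is proper this would force $f(w) \notin \{1,\ldots,c\}$, a contradiction. The symmetric argument applied to $\emptyset \uplus A_2$ rules out $-q$. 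Composing $\phi$ with a $\mathbb{Z}_2$-equivariant deformation retraction of the punctured cross-polytope boundary onto its equator $\{x \in \mathbb{R}^c : \sum_i |x_i| = 1,\ \sum_i x_i = 0\} \cong S^{c-2}$ --- obtained, say, by pushing along great arcs away from $\pm q$ --- yields the required map $\|B(G)\| \to S^{c-2}$.

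The main obstacle is spotting that $\pm q$ must be excised; without this observation one only lands in $S^{c-1}$, giving the weaker bound $\chi(G) \geq \coind(B(G)) + 1$. The exclusion relies on precisely the clauses $\CN(A_1), \CN(A_2) \neq \emptyset$ built into the definition of $B(G)$, so the extra dimension is bought by the nontriviality requirements of the box complex rather than by any deeper topological input; everything else reduces to the standard Borsuk--Ulam nonexistence of equivariant maps $S^n \to S^{c-2}$ for $n > c-2$.
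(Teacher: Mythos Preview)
Your argument is correct. The paper, however, does not supply its own proof of this theorem: it is quoted as a known result, attributed to Lov\'asz and to Theorem~5.9.3 of Matou\v{s}ek~\cite{Mat03}, with the remark that the proof relies on the Borsuk--Ulam theorem. There is therefore nothing in the paper to compare your proof against.

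For context, your argument is essentially the standard one from the literature, just unpacked geometrically. The usual phrasing is that a proper $c$-colouring is a homomorphism $G\to K_c$, which functorially induces a simplicial $\mathbb{Z}_2$-map $B(G)\to B(K_c)$; one then shows that $\|B(K_c)\|$ is $\mathbb{Z}_2$-homotopy equivalent to $S^{c-2}$, whence $\coind(B(G))\leq c-2$. Your map into the boundary of the cross-polytope with $\pm q$ removed is exactly this in coordinates: the cross-polytope boundary on $2c$ vertices $\pm e_1,\dots,\pm e_c$ is a realisation of the join $(K_c)^{*2}$, and deleting the two full faces $\{e_1,\dots,e_c\}$ and $\{-e_1,\dots,-e_c\}$ (which contain $\pm q$ in their interiors) leaves precisely $\|B(K_c)\|$. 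Your observation that the clauses $\CN(A_1)\neq\emptyset$, $\CN(A_2)\neq\emptyset$ are what force the image to miss $\pm q$ is exactly the point that distinguishes $B(G)$ from the simpler ``deleted join'' complex and buys the extra unit in the bound.

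One cosmetic remark: you verify that $\phi$ lands in the cross-polytope boundary only for simplices with both parts nonempty; you might add a clause noting that when one part is empty all contributions carry the same sign, so $\sum_i|x_i|=1$ holds trivially.
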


\section{Quadrangulations}
\label{sec:quadrangulations}

Recall the definition of a quadrangulation of $X$ from the introduction,
namely it is a subgraph $G$ of the $1$-skeleton $K^{(1)}$ of a
generalised simplicial complex $K$ such that $\|K\| \cong X$, and
every maximal simplex of $K$ induces a complete bipartite subgraph of
$G$ with at least one edge.

We define the \emph{parity} of a cycle in a graph to be the
parity of its length; a cycle is even (resp.\ odd) if it has even
(resp.\ odd) parity. We start by a proving the following crucial
property of quadrangulations of projective spaces.

\begin{lemma}
\label{lem:parity}
In every quadrangulation $G$ of a topological space $X$, homologous
cycles have the same parity; in particular, $0$-homologous cycles are
even. If $X=P^n$ and $G$ is not bipartite, then every
$1$-homologous cycle is odd.
\end{lemma}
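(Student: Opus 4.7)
My plan is to reduce the whole lemma to a single local observation: for every 2-simplex $\sigma$ of the underlying generalised simplicial complex $K$, the number of edges of $G$ among the three 1-faces of $\sigma$ is even. To see this, note that $\sigma$ is contained in some inclusionwise maximal simplex $\tau$, whose vertex set carries a bipartition $(A_\tau, B_\tau)$ such that $E(G)$ restricted to $V(\tau)$ is exactly the set of pairs crossing the bipartition. The three vertices of $\sigma$ split as $(3,0)$, $(0,3)$, $(2,1)$ or $(1,2)$ between $A_\tau$ and $B_\tau$, so among the three unordered pairs of vertices of $\sigma$, the number that form edges of $G$ is $0$ or $2$. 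Degenerate 2-simplices (those with coincident vertices or coincident 1-faces) cause no trouble: loop 1-faces are never in $E(G)$ since $G$ is simple, and two 1-faces with equal endpoint pairs are simultaneously $G$-edges or simultaneously not, so again the count is even.

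Summing over any 2-chain $d \in C_2(K,\mathbb Z_2)$, the number of $G$-edges in $\partial d$ (counted in $\mathbb Z_2$) equals $\sum_{\sigma \in d}(\text{even}) \equiv 0 \pmod 2$. Consequently, any cycle $C$ of $G$ that is $0$-homologous in $K$, i.e.\ equals $\partial d$ as a 1-chain for some 2-chain $d$, has even length, since its 1-chain is supported entirely on $E(G)$. More generally, if $C_1, C_2$ are homologous cycles of $G$ with $C_1 + C_2 = \partial d$, the symmetric difference $E(C_1) \triangle E(C_2)$ has even cardinality, so $|E(C_1)| + |E(C_2)|$ is even and $C_1, C_2$ share the same parity.

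For the projective-space statement, recall that $H_1(P^n,\mathbb Z_2) \cong \mathbb Z_2$. If $G$ is not bipartite, it contains an odd cycle $C$. By the first part of the argument, $C$ cannot be $0$-homologous, so $[C]$ is the unique nontrivial class in $H_1(P^n,\mathbb Z_2)$. Every $1$-homologous cycle of $G$ is then homologous to $C$, and by parity preservation is odd as well.

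The main delicacy will be chain-level bookkeeping in the generalised simplicial complex setting, where several 1-simplices of $K$ may share endpoints and a 2-simplex may have identified or degenerate 1-faces. The above argument is deliberately designed to be immune to these phenomena, since the key local parity count depends only on endpoint pairs of vertices of $\sigma$ and on $G$ being simple.
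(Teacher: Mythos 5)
Your proof is correct and follows essentially the same route as the paper's: both hinge on the observation that each $2$-simplex is incident with an even number (namely $0$ or $2$) of $G$-edges, use $\ZZ_2$-linearity to conclude boundaries carry an even number of $G$-edges, and then invoke $H_1(P^n,\ZZ_2)\cong\ZZ_2$ together with the existence of an odd cycle. The only difference is presentational: you spell out the complete-bipartite case analysis and the degenerate-simplex bookkeeping, which the paper leaves implicit.
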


\begin{proof}
  Let $K$ be a generalised simplicial complex whose polyhedron is
  homeomorphic to $X$, such that $G$ is a quadrangulation of $K$.  If
  $a$ and $b$ are homologous $1$-cycles in $G$, we may write
  $a+b=\partial_2(c)$, for some $2$-chain $c \in C_2(K,\mathbb
  Z_2)$. By the definition of quadrangulation, every $2$-simplex in
  $K$ is incident with an even number of edges of $G$ (namely $0$ or
  $2$), so the boundary $\partial_2(c)$ is incident with an even
  number of edges of $G$. Hence, the parity of the length of $a$ and
  $b$ is the same. It also follows that $0$-homologous cycles are
  even.

  We prove the last assertion of the lemma.  It is well known
  (cf.~\cite{Mun84}) that $H_1(P^n,\mathbb Z_2) \cong \mathbb Z_2$, so
  there are only two $\mathbb Z_2$-homology classes of cycles in
  $P^n$: the $0$- and $1$-homologous cycles, which correspond to the
  contractible and non-contractible cycles, respectively.  If $G$ is a
  non-bipartite quadrangulation of $P^n$, it contains at least one odd
  cycle, which must be $1$-homologous. Hence, by the first part of the
  lemma, every $1$-homologous cycle is odd.
\end{proof}

A $2$-colouring $c$ of a complex $K$ is an arbitrary assignment of two
colours, say black and white, to the vertices of $K$. We say that $c$ is
\emph{proper} if there is no monochromatic maximal simplex. The graph
\emph{associated} to the $2$-colouring is a spanning subgraph of the
$1$-skeleton of $K$ consisting of all edges with one end white and the
other black.

The following lemma (which will also be needed later on) makes it
easier to draw examples of quadrangulations using triangulations of
$B^n$ symmetric on the boundary. Figure~\ref{fig:complete} shows,
using condition (c), how the complete graphs $K_3$, $K_4$ and $K_5$
embed as quadrangulations in $P^1$, $P^2$ and $P^3$, respectively.

\begin{figure}
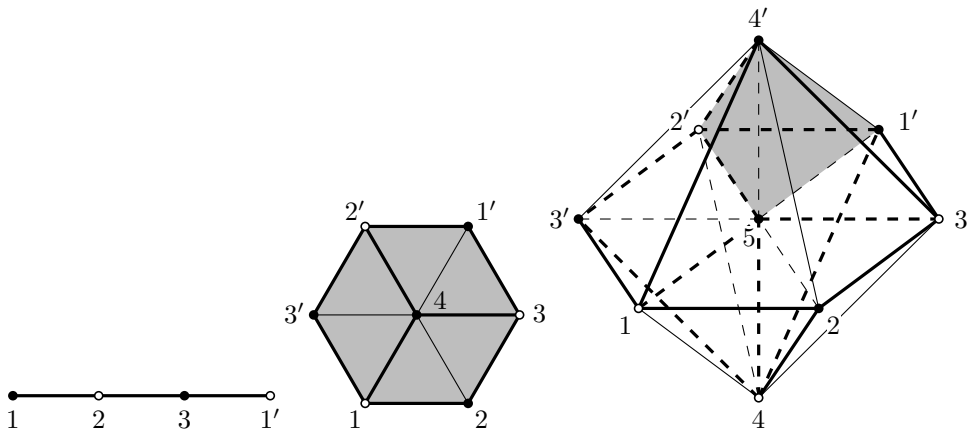

  \centering
  \fig{9}\hf\fig{10}\hf\fig{11}
  \caption{Complete graphs $K_n$ ($3\leq n \leq 5$) as
    quadrangulations of $P^{n-2}$, using a triangulation of $B^{n-2}$
    as in Lemma~\ref{l:quad-ball}(c). Thick lines are the edges of the
    quadrangulation, simplices are depicted in grey (only one is shown
    in the triangulation of $B^3$; note how the quadrangulation
    induces a $K_{1,3}$ on it). Dashed lines represent visibility. We
    write $v'$ for the antipode of the vertex $v$.}
  \label{fig:complete}
\end{figure}

\begin{lemma}\label{l:quad-ball}
  For a graph $G$, consider the following statements:
  \begin{enumerate}[\quad(a)]
  \item \label{l:quad-ball:1} $G$ is a non-bipartite quadrangulation of $P^n$.
  \item \label{l:quad-ball:2} There is a symmetric triangulation $T$
    of $S^n$ such that no simplex of $T$ contains antipodal vertices,
    and there is a proper antisymmetric $2$-colouring of $T$ such that $G$ is
    obtained from the associated graph by identifying all pairs of antipodal
    vertices.
  \item \label{l:quad-ball:3} There is a boundary-symmetric triangulation
    $T'$ of $B^n$ such that no simplex of $T'$ contains antipodal boundary
    vertices, and there is a proper boundary-antisymmetric $2$-colouring of $T'$
    such that $G$ is obtained from the associated graph by identifying
    all pairs of antipodal boundary vertices.
  \end{enumerate}
  Statements (a) and (b) are equivalent and are implied by statement
  (c).
\end{lemma}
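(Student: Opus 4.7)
My plan is to prove (a) $\Leftrightarrow$ (b) by lifting through the double cover $\pi:S^n\to P^n$ in both directions, and to derive (c) $\Rightarrow$ (b) by doubling $B^n$ along its boundary with an antipodal twist.

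For (a) $\Rightarrow$ (b), let $K$ be a complex triangulating $P^n$ for which $G$ is a quadrangulation, and let $T$ be the lift of $K$ along $\pi$. Since $\pi$ restricts to a homeomorphism on each simplex, $T$ is a symmetric triangulation of $S^n$ in which no simplex contains antipodal vertices. The pre-image $\tilde G=\pi^{-1}(G)\subseteq T^{(1)}$ is itself a quadrangulation of $T$, so applying Lemma~\ref{lem:parity} in $S^n$ (using $H_1(S^n,\mathbb Z_2)=0$ for $n\ge 2$; the case $n=1$ is a direct check) forces every cycle of $\tilde G$ to be even, making $\tilde G$ bipartite. I would take this bipartition as the desired $2$-colouring. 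Properness is automatic since each maximal simplex of $T$ induces a complete bipartite subgraph with both sides non-empty. For antisymmetry, non-bipartiteness of $G$ supplies an odd cycle, which Lemma~\ref{lem:parity} identifies as $1$-homologous; lifting it produces an odd path from some $\tilde v_0$ to $-\tilde v_0$ in $\tilde G$, placing these vertices in opposite classes, and a connectivity argument spreads the opposite-class condition to every other antipodal pair. Finally, the associated graph of this colouring coincides with $\tilde G$ (since inside each maximal simplex the bichromatic edges are exactly those of $K_{\tilde A,\tilde B}$), and therefore projects to $G$.

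For (b) $\Rightarrow$ (a), let $K=T/\nu$ be the antipodal quotient. The no-antipodal-vertex hypothesis makes $K$ a generalised simplicial complex triangulating $P^n$, and antisymmetry of the colouring makes the associated graph $\tilde G\subseteq T^{(1)}$ invariant under $\nu$, so it projects to a subgraph $G\subseteq K^{(1)}$. For each maximal simplex $\sigma$ of $K$ with lift $\tilde\sigma$, the $2$-colouring splits $V(\tilde\sigma)$ into non-empty parts $\tilde A,\tilde B$ (by properness), whose bichromatic edges form $K_{\tilde A,\tilde B}$; projecting down, $G$ induces $K_{A,B}$ on $\sigma$ with $|A|,|B|\ge 1$. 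Hence $G$ is a quadrangulation of $P^n$. It is non-bipartite because $\tilde G$ is connected (each maximal simplex contributes a connected bipartite piece and these pieces chain together along the connected complex $T$) and bipartite with $\tilde v$ and $-\tilde v$ in opposite classes; any path between them has odd length and projects to an odd closed walk in $G$.

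For (c) $\Rightarrow$ (b), I would double $T'$ along $\partial B^n$: take copies $T'_+$ and $T'_-$ of $T'$ and glue $\partial T'_-$ to $\partial T'_+$ via the boundary antipodal map, obtaining a symmetric triangulation $T$ of $S^n$. Boundary-symmetry of $T'$ together with the no-antipodal-boundary-vertex hypothesis ensures that no simplex of $T$ contains antipodal vertices. Extend the colouring to $T'_-$ by assigning each antipodal copy the opposite colour; boundary-antisymmetry of the original colouring makes this consistent along the gluing. The resulting colouring on $T$ is proper and antisymmetric, and the antipodal quotient of its associated graph reproduces the graph $G$ obtained from $T'$, so (b) holds. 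I expect the hardest step to be the antisymmetry part of (a) $\Rightarrow$ (b): producing a single bipartition of $\tilde G$ that both respects every maximal simplex's $(A,B)$ structure and is swapped by the antipodal action, which requires combining Lemma~\ref{lem:parity} with the connectivity properties of $\tilde G$, especially if $G$ is disconnected.
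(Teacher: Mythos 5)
Your proposal follows essentially the same route as the paper's proof: lift through the antipodal double cover, deduce bipartiteness and antisymmetry of the lifted graph $\tilde G$ from Lemma~\ref{lem:parity}, take the quotient for (b)$\Rightarrow$(a), and double $T'$ across its boundary for (c)$\Rightarrow$(b); your direct argument for properness and your explicit check of non-bipartiteness in (b)$\Rightarrow$(a) are small refinements over the paper's more terse treatment. One remark on the step you flag as hardest: the connectivity of $\tilde G$ that you correctly establish in (b)$\Rightarrow$(a) (each maximal simplex contributes a connected $K_{A,B}$, and these pieces chain together across the connected triangulation) applies verbatim to $\tilde G=q^{-1}(G)$ in (a)$\Rightarrow$(b), so $G$ and $\tilde G$ are automatically connected and the disconnected case you worry about never arises.
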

\begin{proof}
  We start with the implication
  (\ref{l:quad-ball:1})$\implies$(\ref{l:quad-ball:2}).  If $G$ is a
  quadrangulation of $P^n$, then by definition there exists a
  generalised simplicial complex $K$ such that $\|K\|$ is homeomorphic
  to $P^n$, $G$ is a subgraph of $K^{(1)}$ and every maximal simplex
  of $K$ induces a complete bipartite subgraph of $G$ with at least
  one edge.  It is well known (cf.~\cite{Mun84}) that $P^n$ is the
  quotient space $S^n/\nu$ where $\nu$ is the antipodal action on
  $S^n$, and the projection of $\nu$ is a covering map $p:S^n \to
  S^n/\nu$. Therefore, there is a corresponding simplicial covering
  map $q:T \to K$, where $(T,\xi)$ is a free generalised
  simplicial $\mathbb Z_2$-complex such that $\|T\|\cong S^n$, and the
  homeomorphism induces a centrally symmetric generalised simplicial
  complex structure on $S^n$, with the $\ZZ_2$-action given by the
  antipodal map. The graph $\tilde G=q^{-1}(G)$ is easily seen to be a
  quadrangulation of $T$. Since all cycles in $\tilde G$ are
  0-homologous in the $n$-sphere $\|T\|$, $\tilde G$ is bipartite by
  Lemma~\ref{lem:parity}. Fix a $2$-colouring $c:V(\tilde G) \to
  \{1,2\}$; this defines a colouring of $T$ such that $\tilde G$ is
  its associated graph.  By Lemma~\ref{lem:parity}, $c$ is
  antisymmetric. Suppose that $c$ is not proper. Then there exists an
  $n$-simplex $\sigma \in T$ such that $c(u)=1$ for every $u \in
  \sigma$ (and by antisymmetry, $c(-u)=2$ for all such $u$).
  Therefore, the vertices of $\sigma$ and $-\sigma$ form independent
  sets of $\tilde G$.  Consequently, the vertices of the $n$-simplex
  $q(\sigma) \in K$ form an independent set of $G$, contradicting the
  assumption that $G$ is a quadrangulation of $P^n$. Hence $c$ is
  proper, as required. Finally, the fact that no simplex of $T$
  contains antipodal vertices follows immediately from the fact that
  $K=T/\nu$ has no loops.
  
  We now prove
  (\ref{l:quad-ball:2})$\implies$(\ref{l:quad-ball:1}). Let $T$ be a
  triangulation and $c$ a $2$-colouring of $T$ as in
  property~\ref{l:quad-ball:2}, and let $G_c$ be the associated graph
  of $c$. The quotient of the $\ZZ_2$-action on $T$ is a triangulation
  $K$ of $P^n$. The graph $G$, being obtained from $G_c$ by
  identifying antipodal vertices, is a subgraph of the $1$-skeleton of
  $K$.

  We claim that the subgraph $H$ of $G$ induced on the vertices of any
  maximal simplex $\sigma$ of $K$ is a complete bipartite graph with
  at least one edge. Let $\tau$ be a simplex of $T$ mapped to $\sigma$
  by the covering map corresponding to the $\ZZ_2$-action on $T$. Then
  $H$ is isomorphic to the subgraph of $G_c$ induced on the vertices
  of $\tau$; by the definition of the associated graph, $H$ is
  complete bipartite. Moreover, since $c$ is proper, $H$ has at least
  one edge.
      
  Finally, for the implication
  (\ref{l:quad-ball:3})$\implies$(\ref{l:quad-ball:2}), we take two
  copies of $T'$ (say $T'_1$ and $T'_2$), retain the given $2$-colouring
  on $T'_1$, and invert it on $T'_2$. We glue $T'_1$ and $T'_2$
  together by identifying each simplex of the boundary of $T'_1$ with
  the antipode of its copy in $T'_2$. The resulting generalised
  simplicial complex is a symmetric triangulation $T'_{12}$ of
  $S^n$. Furthermore, note that if the simplices being identified are
  vertices, then they have the same colour. It follows that the
  original proper $2$-colouring $c'$ of $T'$ induces a proper
  $2$-colouring $c'_{12}$ of $T'_{12}$. If we identify the antipodal
  vertices in the associated graph of $c'_{12}$, we obtain the same
  graph (namely $G$) as if we identify the antipodal boundary vertices
  in the associated graph of $c'$. Lastly, note that no simplex of
  $T'_{12}$ contains antipodal vertices: otherwise, by the
  construction, there would be a simplex of $T'_1$ containing
  antipodal boundary vertices, contrary to the assumption.
\end{proof}

It is natural to ask whether statement (c) of Lemma~\ref{l:quad-ball}
is equivalent to statements (a) and (b). This question seems to be
open (cf. the discussion in the last paragraph of~\cite{PS05}).

\section{A lower bound on the chromatic number}
\label{sec:lower}

This section is devoted to the proof of the first of our results
mentioned in Section~\ref{sec:introduction}:
\newtheorem*{theorem-main}{Theorem \ref{thm:main}}
\begin{theorem-main}
  If $G$ is a non-bipartite quadrangulation of $P^n$, then $\chi(G)
  \geq n+2$.
\end{theorem-main}
\begin{proof}
  By Lemma~\ref{l:quad-ball}, there is a symmetric
  triangulation $T$ of $S^n$ and an antisymmetric $2$-colouring
  $c:V(T) \to \{1,2\}$ such that $G$ is obtained from the
  associated graph by identifying antipodal pairs of vertices. Define the
  mapping $f:V(T) \to V(B(G))$
  as $f: v \mapsto (v,c(v))$. 

  Let $A$ be the set of vertices of an arbitrary simplex in $T$.  Set
  $A_i=A \cap c^{-1}(i)$; so $f(A)=A_1 \uplus A_2$.  To prove that $f$
  is a simplicial map, it suffices to show that $A_1 \uplus A_2$ is a
  simplex of $B(G)$. Let $A' \in T$ be the vertex set of a maximal
  simplex in $T$ such that $A \subseteq A'$, and define $A'_i = A'\cap
  c^{-1}(i)$ for $i=1,2$. By the definition of quadrangulation, $A_i
  \subseteq A'_i \subseteq \CN(A'_{3-i})\subseteq \CN(A_{3-i})$ and
  $A'_i \neq \emptyset$, where $i=1,2$.  This shows that $A_1 \uplus
  A_2 \in B(G)$, so $f$ is indeed a simplicial map.

  Moreover, if $\xi$ and $\omega$ are the $\mathbb Z_2$-actions on
  $T$ and $B(G)$, respectively, and $v$ is a vertex in $T$,
  then $f(\xi(v))=(v,3-c(v))=\omega(v,c(v))=\omega(f(v))$, so $f\circ
  \xi=\omega \circ f$. This shows that $f$ is a simplicial $\mathbb
  Z_2$-map, which extends naturally to a simplicial $\mathbb Z_2$-map
  $f':\sd(T) \to \sd(B(G))$; note that $\sd(T)$ is a simplicial
  complex. Its affine extension is therefore a continuous
  $\mathbb Z_2$-map $\|f'\|:S^n \to \|B(G)\|$, so
  $\coind(B(G)) \geq \coind(S^n)=n$. Since $G$ clearly has no isolated
  vertices, the result follows by applying Theorem~\ref{thm:lovasz}.
\end{proof}

Notice that in the proof of Theorem~\ref{thm:main}, we have in fact
shown that $\coind(B(G)) \geq n$ for any non-bipartite
quadrangulation $G$ of $P^n$. In conjunction with results of Simonyi and
Tardos~\cite{SimTar06,SimTar07}, this implies that any non-bipartite
quadrangulation $G$ of $P^n$ satisfies the following properties (for
definitions, see~\cite{SimTar06}):
\begin{enumerate}
\item $G$ has local chromatic number at least $\lceil n/2\rceil +2$;
\item when $n$ is even, $G$ has circular chromatic number at least $n+2$;
\item in any proper colouring of $G$, there is a copy of
$K_{\left\lfloor \frac{n+2}2\right\rfloor, \left\lceil \frac{n+2}2\right\rceil}$
in which all vertices receive different colours;
\item in any proper $(n+2)$-colouring of $G$, there is a copy of
$K_{\ell,m}$ in which all vertices receive different colours, for any
$\ell, m \geq 1$ such that $\ell+m=n+2$.
\end{enumerate}
For $n=2$ these facts were shown in~\cite{MST13,DGMVZ05,AHNNO01,SimTar07},
respectively.

\section{Upper bounds on the chromatic number}
\label{sec:upper}

Every non-bipartite quadrangulation of the projective plane is
4-chromatic; that is, the lower bound proved by Youngs~\cite{You96}
(or given by Theorem~\ref{thm:main}) is actually the right value. This
is not the case in higher dimensions; as the following result shows,
the chromatic number of quadrangulations of $P^3$ is unbounded.

\begin{theorem}\label{t:unbounded}
  For all $r\geq 3$, the complete graph $K_{2r+3}$ embeds in $P^3$ as a
  quadrangulation.
\end{theorem}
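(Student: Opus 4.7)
The plan is to apply Lemma~\ref{l:quad-ball}(c): it suffices to construct, for each $r \geq 3$, a boundary-symmetric triangulation $T'$ of $B^3$ in which no simplex contains an antipodal pair of boundary vertices, together with a proper boundary-antisymmetric $2$-colouring of $T'$ whose associated graph becomes $K_{2r+3}$ upon identification of antipodal pairs of boundary vertices.

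I would carry this out by induction on $r$, with the base case $r = 3$ obtained from a direct, hand-built triangulation of $B^3$ realising $K_9$ in the spirit of the $K_5$-in-$P^3$ example of Figure~\ref{fig:complete}: place one interior vertex at the centre of $B^3$, distribute further vertices as antipodal pairs on $\partial B^3 = S^2$, triangulate symmetrically, and choose the colouring so that every $3$-simplex is non-monochromatic. The inductive step adds to an existing $T'$ one new antipodal pair of boundary vertices (and possibly an auxiliary interior vertex), extends the $2$-colouring by giving opposite colours to the new pair, and locally re-triangulates so that (a) the colouring stays proper; (b) no simplex contains an antipodal pair of boundary vertices; and (c) each of the two additional classes of vertices in the quotient is joined, through the associated graph, to every pre-existing class, so that the quotient gains exactly two new vertices fully connected to the rest. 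Two iterations of this step convert a quadrangulation realising $K_{2r+3}$ into one realising $K_{2r+5}$.

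The main obstacle is condition~(c): it is the only global constraint, and it forces the $1$-skeleton of $T'$ to contain, for every pair of quotient classes, a $1$-simplex joining two opposite-coloured representatives. Maintaining this throughout the induction while keeping the colouring proper and avoiding antipodal pairs inside any simplex is the delicate heart of the argument. I would ensure it by placing each new antipodal pair in sufficiently generic position with respect to the existing triangulation and by coning from a carefully chosen interior vertex, so that the cone neighbourhood of the new pair automatically supplies the required edges; the verification then reduces to a combinatorial case analysis around the added pair. Once the inductive construction succeeds, Lemma~\ref{l:quad-ball}(c) immediately yields the desired quadrangulation of $P^3$ by $K_{2r+3}$ for every $r \geq 3$.
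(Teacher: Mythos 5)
Your overall framing is right---Lemma~\ref{l:quad-ball}(c) is indeed the tool the paper uses, and the goal is a boundary-symmetric, properly $2$-coloured triangulation of $B^3$---but the proposal stops exactly where the proof has to begin, and the route you sketch diverges substantially from the paper's.

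The paper does \emph{not} argue by induction on $r$. It gives a single explicit construction, uniform in $r$: put $2r+1$ black points evenly on the top circle of a cylinder and $2r+1$ white points on the bottom circle (shifted so that each $y_i$ is antipodal to the vertical projection of $x_i$), form the rotationally symmetric complexes $L_j = \Theta([x_0,x_1,y_{r+j},y_{r+j+1}])$ for $j=1,\dots,r-1$, take their union, and then cap it off with two cone vertices $x$, $y$ and a central vertex at the origin. Rotational symmetry makes the verification of ``every class is adjacent to every other class'' essentially automatic, because one only has to check a bounded set of incidences near $[x_0,x_1]$ and transport the rest by the rotation group. This is precisely what a uniform construction buys you and what an inductive one does not.

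The genuine gap in your proposal is that neither the base case nor the inductive step is actually carried out, and the inductive step is not a routine extension. You correctly identify the hard constraint---every quotient class must be joined to every other by a non-monochromatic $1$-simplex---but then dispose of it with ``place the new antipodal pair in sufficiently generic position'' and ``cone from a carefully chosen interior vertex''. That does not establish the claim. When you re-triangulate around a newly inserted boundary pair $\{p,-p\}$ you must simultaneously (i) put into the link of $p$ an opposite-coloured representative of \emph{each} of the $2r+4$ other classes (and likewise for $-p$ and for the auxiliary interior vertex), (ii) keep every maximal simplex non-monochromatic, (iii) keep every simplex free of antipodal boundary pairs, and (iv) not subdivide away any of the $1$-simplices that realised the old $K_{2r+3}$. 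Requirement (iv) in particular is in tension with ``locally re-triangulate'': inserting a vertex in generic position and coning will typically subdivide nearby edges and destroy pre-existing adjacencies unless the insertion region is controlled very precisely. Nothing in the proposal shows that these four constraints can be met simultaneously, and no candidate triangulation for the base case $K_9$ is exhibited either. There is also a small arithmetic slip: a step that adds one boundary pair and one interior vertex already passes from $K_{2r+3}$ to $K_{2r+5}$, so ``two iterations'' to achieve this is off by one. To turn this into a proof you would need, at minimum, an explicit base-case complex and an explicit, verified local surgery; the paper avoids the whole issue by building the complex for all $r$ at once with a rotational symmetry that trivialises the adjacency bookkeeping.
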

\begin{proof}
  Consider the cylinder $C$ in $\RR^3$ that is the product of the unit
  circle $S^1$ in the $xy$ plane with the interval $[-1,1]$ on the $z$
  axis. Let $C^+$ denote the top circle of $C$ and $C^-$ its bottom
  circle. Distribute points $x_0,\dots,x_{2r}$ evenly along $C^+$ and
  colour them black. Further, distribute white points
  $y_0,\dots,y_{2r}$ along $C^-$ in such a way that for each $i$, the
  vertical projection of $x_i$ to $C^-$ is antipodal to $y_i$ on
  $C^-$. Let $V=\Setx{x_0,\dots,x_{2r},y_0,\dots,y_{2r}}$. For any
  geometric simplex $\tau$ on $V$, define $\Theta(\tau)$ as the
  geometric simplicial complex on $V$ whose facets are all the images
  of $\tau$ under rotations of $\RR^3$ about the $z$ axis mapping $V$
  to itself.

  For $j = 1,\dots,r-1$, let $\sigma_j$ be the linear simplex
  $[x_0,x_1,y_{r+j},y_{r+j+1}]$, and let $L_j =
  \Theta(\sigma_j)$. (See Figure~\ref{fig:construction} for an
  illustration.) Further, define $Z^+ = \Theta([x_0,x_1])$
  and $Z^- = \Theta([y_0,y_1])$.

  \begin{figure}
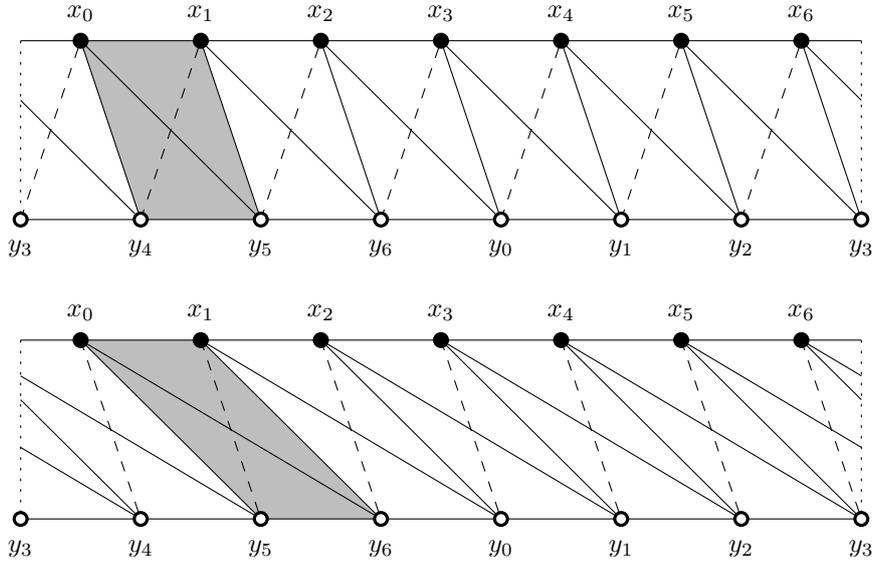

    \centering
    \hf\fig{12}\hf\\[6mm]
    \hf\fig{13}\hf
    \caption{The complexes $L_1$ and $L_2$ in the proof of
      Theorem~\ref{t:unbounded} for $r=3$. The defining 3-simplices
      $[x_0,x_1,y_4,y_5]$ and $[x_0,x_1,y_5,y_6]$ are shown
      grey. Dashed lines represent visibility from the origin. The
      vertical dotted lines are identified.}
    \label{fig:construction}
  \end{figure}

  Let us define the \emph{inner boundary} of any geometric simplicial
  complex in $\RR^3$ as the subcomplex consisting of faces fully
  visible from the origin. We claim that the inner boundary of
  $L_j$ is the complex $L_j^* = \Theta([x_0,x_1,
  y_{r+j+1}]) \cup \Theta([x_0,y_{r+j},y_{r+j+1}])$. (To see this,
  project $\sigma_j$ to the $xy$ plane and note that among the edges
  of $\sigma_j$ with one end white and one end black,
  $[x_0,y_{r+j+1}]$ is the one closest to the $z$ axis;
  cf.~Figure~\ref{fig:projection}. Consequently, the inner boundary
  consists of the $2$-simplices containing it, together with their
  rotational images.)

  \begin{figure}
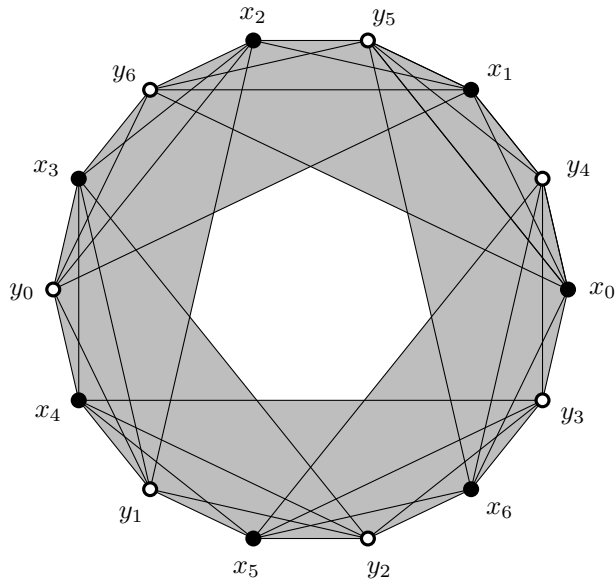

    \centering
    \fig{14}
    \caption{The vertical projection of $L_1\cup L_2$
      (grey) and its $1$-skeleton in the proof of
      Theorem~\ref{t:unbounded} (for $r=3$).}
    \label{fig:projection}
  \end{figure}

  Let $j < \ell$ be distinct integers between $1$ and $r-1$. It is
  easy to check that
  \begin{equation*}
    L_j \cap L_\ell =
    \begin{cases}
      L_j^* & \text{if $\ell = j+1$},\\
      Z^+ \cup Z^- & \text{otherwise}.
    \end{cases}
  \end{equation*}
  Let $L$ be the union of all the complexes $L_j$. By the above, $L$
  is homotopy equivalent to $C$ and its inner boundary is $L_{r-1}^*$.
  
  To make $L$ into a triangulation of the $3$-dimensional
  ball, we do the following:
  \begin{itemize}
  \item place a white vertex $x$ in the centre of $C^+$ and a black
    vertex $y$ in the centre of $C^-$,
  \item add the joins $x * Z^+$ and $y * Z^-$,
  \item place a black vertex at the origin and add its join with
    $L_{r-1}^*\cup (x * Z^+) \cup (y * Z^-)$.
  \end{itemize}
  The resulting complex triangulates the convex hull of $C$, which is
  homeomorphic to $B^3$. It corresponds to a triangulation of $B^3$
  which is symmetric on the boundary; furthermore, the $2$-colouring
  defined above is proper and antisymmetric on the boundary. The
  associated graph, with antipodal boundary vertices identified, is
  easily seen to be the complete graph $K_{2r+3}$.
\end{proof}

We obtain the following result as a corollary of
Theorem~\ref{t:unbounded} for higher dimensions:
\newtheorem*{theorem-upper}{Theorem~\ref{thm:upper}}
\begin{theorem-upper}
  For $n \geq 3$ and $t\geq 5$, the complete graph $K_t$ embeds in
  $P^n$ as a quadrangulation if $t-n$ is even.
\end{theorem-upper}
\begin{proof}
  Using Theorem~\ref{t:unbounded}, embed $K_{t-n+3}$ as a
  quadrangulation in $P^3$. Let $T$ be a $2$-coloured triangulation of
  $S^3$ satisfying the condition of Lemma~\ref{l:quad-ball}(b). Taking
  the $(n-3)$-fold suspension, that is, the $(n-3)$-fold join of $T$
  with $S^0$ (consisting of two points, one black and one white), we
  obtain a triangulation $T'$ of $S^n$, again satisfying the condition
  of Lemma~\ref{l:quad-ball}(b). Moreover, identifying the antipodal
  vertices in the associated graph, we obtain $K_t$.
\end{proof}

While unbounded in general in dimension $n>2$, the chromatic number of
higher-dimensional projective quadrangulations is bounded when the
quadrangulation is `sufficiently fine', as shown by the following
proposition.

\begin{proposition}
\label{prop:borsuk}
Let $G$ be a quadrangulation of $P^n$ and $T$ the corresponding
$2$-coloured symmetric triangulation of $S^n$. If the vertices of $T$
lie on the unit sphere $S^n$ in $\mathbb R^{n+1}$ and adjacent
vertices of different colours are at Euclidean distance less than
$\frac 2{\sqrt{n+3}}$, then $\chi(G)=n+2$.
\end{proposition}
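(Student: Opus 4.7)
Since Theorem~\ref{thm:main} supplies $\chi(G)\ge n+2$, the plan is to exhibit a proper $(n+2)$-colouring. Using Lemma~\ref{l:quad-ball}(b) I pass to the lift $T$ with antisymmetric $2$-colouring $\phi:V(T)\to\{0,1\}$, so that $G$ is the quotient of the associated graph of $T$ by the antipodal map. Define $g:V(T)\to S^n$ by $g(v)=(-1)^{\phi(v)}v$; the identity $\phi(-v)=1-\phi(v)$ gives $g(-v)=g(v)$, so $g$ descends to $\bar g:V(G)\to S^n$. Fix the vertices $w_1,\dots,w_{n+2}$ of a regular $(n+1)$-simplex inscribed in $S^n$ with centroid at the origin (so $\langle w_i,w_j\rangle=-1/(n+1)$ for $i\ne j$), and define the closed Voronoi cells $C_k=\{x\in S^n:\langle x,w_k\rangle\ge\langle x,w_j\rangle\text{ for all }j\}$. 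Colour each vertex by the index of a $C_k$ containing its $g$-image (breaking ties by any fixed rule); antipodal invariance of $g$ ensures this descends to a colouring $c$ of $G$.

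The colouring is proper provided $g(u),g(v)$ lie in different cells for every edge $uv\in E(T)$ with $\phi(u)\ne\phi(v)$, and this will follow from two matching estimates. Taking (without loss of generality) $\phi(u)=0$ and $\phi(v)=1$, one has $g(u)-g(v)=u+v$, and the parallelogram identity $\|u-v\|^2+\|u+v\|^2=4$ together with $\|u-v\|<2/\sqrt{n+3}$ yields
\[
\|g(u)-g(v)\|\;>\;D:=2\sqrt{\tfrac{n+2}{n+3}}.
\]
The matching estimate is $\operatorname{diam}(C_k)\le D$. A direct calculation shows that $C_k$ is the set of unit vectors of the form $x=-\sum_{j\ne k}a_j w_j$ with all $a_j\ge 0$; expanding $\langle x,y\rangle$ for a second such $y$ with coefficients $(b_j)$ gives
\[
\langle x,y\rangle=\tfrac{n+2}{n+1}\sum_{j\ne k}a_jb_j-\tfrac{AB}{n+1}\;\ge\;-\tfrac{AB}{n+1},
\]
where $A=\sum a_j$, $B=\sum b_j$, with equality iff $(a_j)$ and $(b_j)$ have disjoint supports. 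The constraint $\|x\|=1$ forces $A^2\le p(n+1)/(n+2-p)$ when $|\operatorname{supp}(a)|=p$, so maximising $AB$ over disjoint supports (with $p+q\le n+1$) gives $AB\le (n+1)^2/(n+3)$ and hence $\|x-y\|^2\le 2+2AB/(n+1)\le 4(n+2)/(n+3)=D^2$.

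Combining the two, $c(u)=c(v)=k$ would force $g(u),g(v)\in C_k$ and hence $\|g(u)-g(v)\|\le D$, contradicting the first estimate; thus $c$ is a proper $(n+2)$-colouring and $\chi(G)\le n+2$. The main technical obstacle is the Voronoi diameter bound: equality $\operatorname{diam}(C_k)=D$ is actually attained for odd $n$ (with $p=q=(n+1)/2$ in the optimisation above), so the edge-length threshold $2/\sqrt{n+3}$ is sharp for this method.
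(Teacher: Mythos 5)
Your argument is correct, and the core idea coincides with the paper's: map each vertex of $G$ to its black lift on $S^n$ (your $g(v)=(-1)^{\phi(v)}v$ is exactly that identification), note that the parallelogram law turns the hypothesis $\|u-v\|<2/\sqrt{n+3}$ into $\|g(u)-g(v)\|>2\sqrt{(n+2)/(n+3)}$, and then colour by the regular-simplex partition of the sphere. Where the paper stops here and cites Simonyi--Tardos for the fact that the Borsuk graph $B(n+1,\alpha)$ is $(n+2)$-chromatic for $\alpha>2\sqrt{1-1/(n+3)}$, you instead derive that bound from scratch: the explicit description of the Voronoi cell $C_k$ in the cone $-\sum_{j\neq k}a_jw_j$ ($a_j\ge0$), the inner-product formula, and the constrained optimisation of $AB$ under $\|x\|=1$ with disjoint supports. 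I checked the computation (the Cauchy--Schwarz bound $A^2\le p(n+1)/(n+2-p)$, the maximisation at $p=q=(n+1)/2$, and the final $\|x-y\|^2\le 4(n+2)/(n+3)$) and it is right, so your version is self-contained where the paper's defers to the literature; the trade-off is a longer argument in exchange for making the sharpness of the threshold visible (equality in the cell-diameter bound when $n$ is odd). One cosmetic remark: the proposition as stated does not say ``non-bipartite'', but the lower bound $\chi(G)\ge n+2$ (which you invoke from Theorem~\ref{thm:main}) needs it; this is an implicit hypothesis in the paper too, not a gap you introduced.
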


Before proving the proposition, let us recall the following construction
due to Erd\H os and Hajnal~\cite{ErdHaj67}.
The \emph{Borsuk graph} $B(n,\alpha)$ is defined as the (infinite) graph
whose vertices are the points of $\mathbb R^n$ on $S^{n-1}$, and the edges
connect points at Euclidean distance at least $\alpha$, where $0<\alpha<2$.
By the Borsuk-Ulam theorem~\cite{Bor33}, $\chi(G) \geq n+1$ (in fact the two
statements are equivalent, as noted by Lov\'asz~\cite{Lov83}). By using the
standard $(n+1)$-colouring of $S^{n-1}$ (based on the central projection of a
regular $n$-simplex), it may be shown that $B(n,\alpha)$ is $(n+1)$-chromatic
for all $\alpha$ sufficiently large. In particular, Simonyi and
Tardos~\cite{SimTar06} have shown that if $\alpha>2 \sqrt{1-1/(n+2)}$ then
$B(n,\alpha_0)$ is $(n+1)$-chromatic.

\begin{proof}[Proof of Proposition~\ref{prop:borsuk}]
  Consider the symmetric triangulation $T$ and identify the vertices
  of $G$ with their corresponding black vertices in $\|T\|$.  This
  gives an embedding of $G$ in $\mathbb R^{n+1}$ with all vertices on
  $S^n$, such that adjacent vertices are `nearly antipodal'. More
  precisely, the Euclidean distance between them is greater than
  $\alpha_0$, where
  \[
    \alpha_0=\sqrt{4-\left(\tfrac 2{\sqrt{n+3}}\right)^2}=2\sqrt{1-\tfrac 1{n+3}}.
  \]
  In particular, $G$ is a subgraph of the Borsuk graph $B(n+1,\alpha)$,
  where $\alpha>\alpha_0$. As shown by Simonyi and Tardos~\cite{SimTar06},
  $\chi(B(n+1,\alpha)) \leq n+2$, so a fortiori $\chi(G) \leq n+2$.
\end{proof}

\section{Application to Kneser and Mycielski graphs}
\label{sec:mycielski}

Recall that for given $n,k\geq 1$, where $n > 2k$, the vertices of
the \emph{Kneser graph} $KG(n,k)$ are the $k$-element subsets of $[n]
= \Setx{1,\dots,n}$, and edges join pairs of subsets that are
disjoint.

Schrijver~\cite{Sch78} characterised a family of vertex-critical
subgraphs of Kneser graphs. Viewing $[n]$ as the vertex set of the
$n$-cycle $C_n$ (with edges $12$, $23$, \dots, $(n-1)n$, $n1$), let
$\mathcal I_k(C_n)$ be the collection of all $k$-element independent
subsets of $C_n$. The \emph{Schrijver graph} $SG(n,k)$ is defined as
the induced subgraph of $KG(n,k)$ on $\mathcal I_k(C_n)$. As shown
in~\cite{Sch78}, $SG(n,k)$ is $(n-2k+2)$-chromatic and
vertex-critical. 

In this section, we prove the following result, stated in
Section~\ref{sec:introduction}:
\newtheorem*{theorem-schrijver}{Theorem \ref{thm:schrijver}}
\begin{theorem-schrijver}
  Let $n > 2k$ and $k \geq 1$. There exists a non-bipartite
  quadrangulation of $P^{n-2k}$ that is homomorphic to $SG(n,k)$.
\end{theorem-schrijver}
In view of Theorem~\ref{thm:main}, Theorem~\ref{thm:schrijver} gives
an alternative proof that $\chi(SG(n,k)) \geq n-2k+2$.

To obtain the quadrangulation in Theorem~\ref{thm:schrijver}, we
utilise the generalised Mycielski construction of Gy\'{a}rf\'{a}s,
Jensen and Stiebitz~\cite{GJT04} (see also~\cite[p.~133]{Mat03}). Let
$r\geq 1$ and let $G$ be a graph with vertex set $V$. The
\emph{generalised Mycielskian} $M_r(G)$ of $G$ in defined as follows:
\begin{enumerate}[\quad (M1)]
\item the vertex set of $M_r(G)$ is $\Setx{z}\cup (V\times [r])$,
\item for $i = 2,\dots,r$ and adjacent vertices $v,w$ of $G$, the
  vertices $(v,i)$ and $(w,i-1)$ of $M_r(G)$ are adjacent,
\item $M_r(G)$ contains a copy of $G$ on $V\times\Setx r$,
\item the vertex $z$ (which we will refer to as the \emph{universal}
  vertex) is adjacent to all vertices in $V\times\Setx{1}$.
\end{enumerate}
(We note that the original definition was given for $r\geq 2$; we
extend it to the case $r=1$, which consists in adding the universal
vertex.)

It is not difficult to see that for all $r\geq 1$, the chromatic
number of $M_r(G)$ is at most $\chi(G)+1$. While there are graphs for
which the inequality is strict, it follows from the results
of~\cite{GJT04} that equality holds for graphs obtained from an
odd cycle by a finite number of iterations of the generalised
Mycielskian $M_r(\cdot)$.

For $n\geq 3$, the \emph{Mycielski graph} $M_n$ is defined as the
graph obtained from the 5-cycle by the $(n-3)$-fold iteration of the
operation $M_2(\cdot)$. In particular, $M_3$ is the 5-cycle. It is
well known~\cite{Myc55} that for each $n$, $M_n$ is triangle-free and
$n$-chromatic. The following theorem gives a new geometrical intuition
for this fact:

\begin{theorem}\label{thm:mycielski}
  If $G$ is a nonbipartite quadrangulation of $P^n$ and $r\geq 1$,
  then $M_r(G)$ is a nonbipartite quadrangulation of $P^{n+1}$. In
  particular, the Mycielski graph $M_n$ and the complete graph $K_n$
  embed as nonbipartite quadrangulations in $P^{n-2}$.
\end{theorem}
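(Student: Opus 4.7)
The plan is to apply Lemma~\ref{l:quad-ball}(c) by constructing a boundary-symmetric triangulation $T'$ of $B^{n+1}$ whose associated graph, after antipodal identification of the boundary, is $M_r(G)$. By Lemma~\ref{l:quad-ball}(b), fix a symmetric triangulation $T$ of $S^n$ with a proper antisymmetric $2$-colouring $c_T$ whose associated-graph quotient is $G$, and take $T$ as the boundary of $T'$. In the interior of $B^{n+1}$, place a central vertex $z$ together with $r-1$ interior layers $L_1,\ldots,L_{r-1}$, each carrying $|V(G)|$ vertices labelled $(v,i)$ for $v\in V(G)$. Colour $z$ black and $(v,i)$ white when $i$ is odd, black when $i$ is even; together with $c_T$ on the boundary this is boundary-antisymmetric.

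The $(n+1)$-simplices of $T'$ are built shell by shell between consecutive layers, thinking of $z$ as layer $0$ and the boundary as layer $r$. Within the shell between layers $i$ and $i+1$, for each $n$-simplex $\sigma$ of $T$ I would include a family of $(n+1)$-simplices that lift $\sigma$'s vertices to the two adjacent layers in a colour-compatible way, so that each resulting simplex is heterochromatic and contains no pair of antipodal boundary vertices. Near $z$, the cone is restricted so that $z$ shares a simplex with every vertex of $L_1$ but with no boundary vertex whose colour is opposite to that of $z$; this guarantees that $z$ is adjacent to every vertex of $L_1$ but to no boundary vertex in the associated graph. For $r=1$ the construction degenerates to the cone $\{z\}\cup\sigma$ for each $n$-simplex $\sigma$ of $T$, which directly produces $M_1(G)$ after identification.

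With $T'$ so constructed, one checks that (i) $T'$ triangulates $B^{n+1}$; (ii) the colouring is proper and boundary-antisymmetric; (iii) no simplex contains antipodal boundary vertices; (iv) the associated-graph quotient realises exactly the Mycielski edges, namely $z$-to-$L_1$, generalised Mycielski edges $(u,i)\sim(v,i+1)$ mirroring $E(G)$ between consecutive layers, and a copy of $G$ at level $r$. Lemma~\ref{l:quad-ball}(c) then gives $M_r(G)$ as a non-bipartite quadrangulation of $P^{n+1}$, with non-bipartiteness inherited from the $G$-subgraph at level $r$. The "in particular" claims follow by iteration: from $K_3$ as a non-bipartite quadrangulation of $P^1$, iterating the construction with $r=1$ produces $K_n$ in $P^{n-2}$, while from $C_5$ in $P^1$, iterating with $r=2$ produces the Mycielski graph $M_n$ in $P^{n-2}$. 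The main obstacle is giving, for general $r\ge 2$ and $n\ge 2$, the $(n+1)$-simplices explicitly enough to verify properness of the $2$-colouring: a subtlety already visible in the small case $n=1$, $r=2$, $G=K_3$, where a naive prism decomposition between layers produces monochromatic triangles and must be replaced by a chord-based subdivision in which the interior layer vertices sit inside a "$V_B$-region" cut out by diagonal chords, at the cost of a somewhat elaborate combinatorial description.
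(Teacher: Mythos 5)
Your high-level plan matches the paper's: both invoke Lemma~\ref{l:quad-ball} to reduce to building a suitably $2$-coloured triangulation of $B^{n+1}$ whose boundary is the symmetric triangulation $T$ of $S^n$ associated to $G$, with the universal vertex at the centre and the generalised-Mycielski levels arranged as concentric layers. The colouring by parity of level and the derivation of the ``in particular'' clause (iterating $r=1$ from $K_3$ and $r=2$ from $C_5$) also agree with the paper.

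However, there is a genuine gap, and you have in fact named it yourself: you do not actually exhibit the $(n+1)$-simplices, and a layer-by-layer ``shell'' decomposition does not obviously work because the region between two $n$-spherical layers is a collection of prisms over the $n$-simplices of $T$, which cannot be triangulated independently (the triangulations must agree on shared faces) and whose naive triangulations produce monochromatic simplices. The phrase ``chord-based subdivision in which the interior layer vertices sit inside a $V_B$-region'' signals a workaround that is not carried out, and with it the verification of properness, the absence of monochromatic maximal simplices, and the claim that the result really triangulates $B^{n+1}$ all remain unverified. These are exactly the points where the proof has content.

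The paper avoids the prism problem entirely by not building full shells at once. It fixes a linear \emph{order of precedence} on $V(G)$ and constructs a descending chain $L_r=T \supset L_{r-1} \supset \dots \supset L_0=L$ where each step processes the vertices of $G$ one at a time: for each $v$ it inserts a single new vertex $v^i$ on the open segment from $v^{i+2}$ to the origin, cones it over every currently ``active'' simplex through $v^{i+2}$, and then swaps the active/inactive status of $v^i$ and $v^{i+2}$. Because each replacement is a local star operation on a single vertex (and $v^i$ inherits the colour of $v^{i+2}$, which sits two levels away and hence has the same colour), each newly created maximal simplex automatically inherits both colours from the simplex it extends, and the union of all steps visibly thickens $T$ inward until $z$ can be coned over the inner boundary. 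This vertex-by-vertex thickening is the missing idea: it replaces the global combinatorial problem of compatibly triangulating prisms by a sequence of trivially compatible star subdivisions, and is what makes the properness and $B^{n+1}$-triangulation claims checkable. Without it, or an equally explicit substitute, your argument does not close.
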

\begin{proof}
  By Lemma~\ref{l:quad-ball}, let $T$ be an antipodally
  symmetric triangulation of $S^n$ with an antisymmetric $2$-colouring
  such that $G$ is obtained from the associated graph by identifying
  antipodal vertices. 

  We will extend $T$ to a triangulation $L$ of the
  ball $B^{n+1}$ (with a proper $2$-colouring) whose associated graph,
  with antipodal boundary vertices identified, is $M_r(G)$. The
  construction can be viewed as a counterpart of the Mycielski
  construction on the level of simplicial complexes.

  To avoid excessive formalism, we will assume that $T$ is a
  geometric simplicial complex with a realization $\|T\|$ in
  $\RR^{n+1}$ where all the vertices lie on the unit $n$-sphere
  $S^n$. The general case (where we have a generalised simplicial
  complex and the simplices are not necessarily linear) follows using
  the same idea.

  Let us fix an arbitrary linear order $\preceq$ on the vertices of
  $G$ (the \emph{order of precedence}).  We construct a sequence
  $L_r,\dots, L_1, L_0$ of geometric complexes with proper
  $2$-colourings starting with $L_r = T$, ending with $L_0 = L$, and
  such that for each $i = r,\dots,1$, $L_i$ is a subcomplex of
  $L_{i-1}$ (as a $2$-coloured complex).

  The complex $L_r$ has two antipodal vertices of different
  colours for each vertex $v\in V(G)$; let us call the white one $v^r$
  and the black one $v^{r+1}$. For $1\leq i \leq r-1$, the set
  $V(L_i) - V(L_{i+1})$ is denoted by $V^i$ and
  consists of vertices $v^i$, where $v\in V(G)$. We define $V^r =
  V(L_r)$ and $V^0 = \Setx z$, where $z$ is a special vertex. For each
  vertex of $V^i$ ($0\leq i \leq r$), the integer $i$ is its
  \emph{level}.

  For each $i = r-1,\dots,1$, we extend $L_{i+1}$ to $L_i$ as
  follows. (See the illustration in Figures~\ref{fig:mycielski1} and
  \ref{fig:mycielski2}.) Define the set of \emph{active} vertices as
  $V^{i+2} \cup V^{i+1}$. For each vertex $v$ of $G$ in the order of
  precedence, do the following:
  \begin{itemize}
  \item add the vertex $v^i$, embed it in the open segment from
    $v^{i+2}$ to the origin and assign it the colour of $v^{i+2}$,
  \item for each simplex $\sigma$ containing $v^{i+2}$ and consisting
    of active vertices, add the simplex $\sigma\cup\Setx{v^i}$ (with
    the linear embedding),
  \item mark $v^i$ active and $v^{i+2}$ inactive.
  \end{itemize}

  Note that the colour of each vertex of $L$ only depends on its
  level, and it alternates as the level changes from $r$ to $0$.

  As a final step, construct the complex $L_0 = L$:
  \begin{itemize}
  \item add the vertex $z$, placing it at the origin, and assign it
    the colour given to the vertices in $V^2$,
  \item for each simplex $\sigma$ of $L_1[V_2\cup V_1]$, add the
    simplex $\sigma\cup\Setx{z}$ to $L_0$.
  \end{itemize}

  \begin{figure}
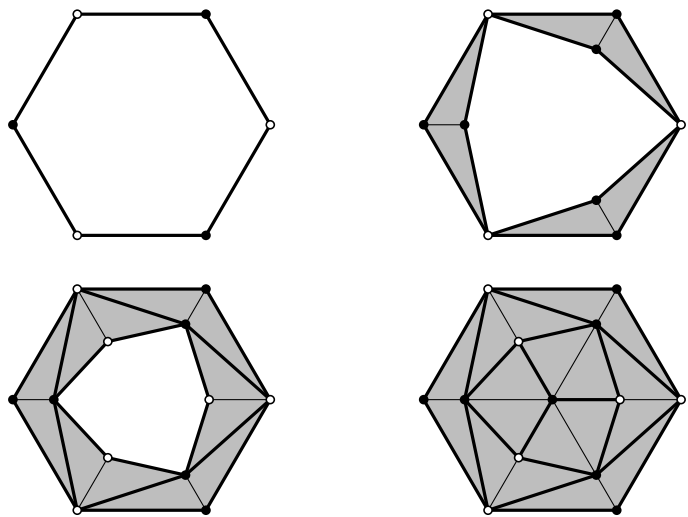

    \centering
    \hf\fig5\hf\fig6\hf\\[5mm]
    \hf\fig7\hf\fig8\hf
    \caption{The complex $T=L_3$ and the complexes
      $L_2$, $L_1$ and $L_0 = L$
      in the proof of Theorem~\ref{thm:mycielski} (for $n=1$ and
      $r=3$). The thick edges are the edges of the associated graph,
      the grey regions represent $2$-simplices.}
    \label{fig:mycielski1}
  \end{figure}
 
  \begin{figure}
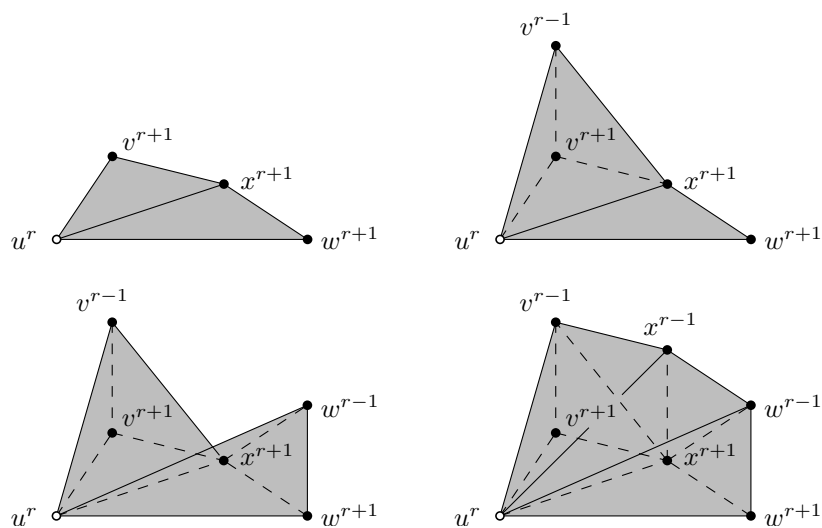

    \centering
    \hf\fig1\hf\fig2\hf\\[5mm]
    \hf\fig3\hf\fig4\hf
    \caption{Adding $3$-simplices in the construction of $L_{r-1}$ in
      the proof of Theorem~\ref{thm:mycielski} (for $n=2$, assuming
      $v\preceq w \preceq x$, where $v,w,x\in V(G)$). Dashed lines
      indicate visibility, the grey regions represent simplices of
      dimension $2$ and $3$.}
    \label{fig:mycielski2}
  \end{figure}
 
  Clearly, $L$ is an $(n+1)$-dimensional complex containing
  $T$ as a subcomplex. We claim that $L$
  triangulates the $(n+1)$-dimensional ball $B^{n+1}$. Indeed, each
  complex $L_i$ in the sequence is obtained from the preceding one by
  `thickening' the simplices of $L_{i+1}[V^{i+2}\cup V^{i+1}]$ at the
  vertices of $V^{i+2}$, in the direction to the origin. After a
  series of these steps, the thickened sphere $L_1$ is filled
  in by inserting the vertex $z$ and joining it to all the simplices
  forming the `interior boundary' of $L_1$.

  Let $\tilde G$ be the graph obtained from the associated graph of
  $L$ by identifying antipodal boundary vertices. We show
  that $\tilde G = M_r(G)$. To this end, we check conditions
  (M1)--(M4) in the definition of $M_r(G)$.

  The vertex set of $\tilde G$ is $\Setx z \cup V(G) \times [r]$,
  since the identification of antipodal vertices of $T$
  identifies each vertex at level $r+1$ with a vertex at level
  $r$. (If the identification involves vertices, say, $v^{r+1}$ and
  $v^r$, we continue to call the resulting vertex $v^r$.) Furthermore,
  the induced subgraph of $\tilde G$ on the set of vertices arising
  from the identification is $G$ by the assumption on $T$. So
  far, we have checked conditions (M1) and (M3).

  To prove (M2) and (M4), recall that the colours of vertices of
  $L$ alternate with respect to their level. Since the levels
  of vertices in each simplex of $L$ differ by at most two,
  all the edges of $\tilde G$ come from $1$-simplices of $L$ of
  the form $v^iw^{i-1}$, where $v,w\in V(G)$ and $1\leq i \leq
  r+1$. It is easy to prove by induction that for each $i$, $2\leq i
  \leq r$, and all $u,v\in V(G)$, $L$ contains the $1$-simplex
  $[v^i,w^{i-1}]$ if and only if $v$ and $w$ are neighbours in
  $G$. This implies property (M2). Property (M4) follows once we note
  that $L$ contains $1$-simplices $[z,v^1]$ for all $v\in
  V(G)$, they are not monochromatic, and $z$ does not form a $1$-simplex
  with any vertex at level greater than $2$.

  The proof that $\tilde G = M_r(G)$ is complete. The statement about
  the Mycielski graphs and the complete graphs follows from the above by
  induction (and the observation that the $5$-cycle and $K_3$ embed in
  $P^1$ as quadrangulations).
\end{proof}

Theorem~\ref{thm:schrijver} will be derived from the following lemma:

\begin{lemma}\label{l:homo}
  If $k\geq 1$ and $n > 2k+1$, then the graph $M_k(SG(n-1,k))$ is
  homomorphic to $SG(n,k)$. 
\end{lemma}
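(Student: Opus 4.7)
I construct an explicit graph homomorphism $f:V(M_k(\SG(n-1,k)))\to V(\SG(n,k))$. The starting observation is that the inclusion $[n-1]\hookrightarrow[n]$ induces a homomorphism $\phi:\SG(n-1,k)\to\SG(n,k)$: any $k$-independent subset of $C_{n-1}$, viewed as a subset of $[n]$, remains $k$-independent in $C_n$, because the only edge of $C_{n-1}$ absent from $C_n$ on the shared vertex set $[n-1]$ is the wrap-around edge $\{1,n-1\}$, and the new edges of $C_n$ involve $n\notin[n-1]$.

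The map $f$ is built level by level. On the top level I put $f(v,k):=\phi(v)$. The universal vertex $z$ is sent to a canonical element $z^*\in V(\SG(n,k))$; for instance $z^*=\{2,4,\dots,2k-2,n\}$, which lies in $\mathcal{I}_k(C_n)$ because $n\ge 2k+2$. For each intermediate level $i\in\{1,\dots,k-1\}$ and each $v=\{s_1<\cdots<s_k\}$, $f(v,i)$ is defined by a rule that replaces some of the ``old'' elements of $\phi(v)$ by ``new'' canonical elements of $[n]$ (drawn from near $1$ and/or near $n$); the rule depends on $i$ so that consecutive levels use disjoint new parts, and it is adjusted when $v$ contains extreme elements such as $s_1=1$ or $s_k=n-1$.

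Verifying that $f$ is a homomorphism amounts to three checks: (i) on level $k$, $\phi$ preserves adjacency by definition; (ii) for each edge $vw$ of $\SG(n-1,k)$ and each $i\in\{2,\dots,k\}$, $f(v,i)\cap f(w,i-1)=\emptyset$ in $[n]$; (iii) $z^*\cap f(v,1)=\emptyset$ for every $v$. Condition (ii) is handled by case analysis: the old--old case reduces to $v\cap w=\emptyset$; the mixed cases use that new elements at level $i$ are chosen to lie in positions that the old parts at level $i-1$ cannot reach (and vice versa), thanks to the inequality $s_{j+1}\ge 2j+1$ forced by $C_{n-1}$-independence; the new--new case uses the disjointness of the canonical pools used at consecutive levels. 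Condition (iii) follows from the explicit choice of $z^*$ together with the level-$1$ rule, since the new part at level $1$ consists of elements of the opposite ``parity class'' from those in $z^*$.

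The hard part is designing the combinatorial rule for $f(v,i)$ on intermediate levels so that (ii) and (iii) hold simultaneously for every $v$, in particular for the extreme vertices with elements tightly packed near $1$ or $n-1$. The hypothesis $n>2k+1$ is used crucially: it provides exactly the slack on the cycle $C_n$ needed to make consecutive levels' new parts disjoint, to keep all level-$1$ images inside $N(z^*)$, and to guarantee that $z^*$ itself is a valid vertex of $\SG(n,k)$.
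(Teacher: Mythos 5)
Your outline correctly identifies the strategy the paper uses: map the top copy by the inclusion $[n-1]\hookrightarrow[n]$, map intermediate copies by trimming the outer elements of $I$ and appending canonical elements near $1$ and near $n$, alternate the parity of those canonical elements from level to level so that a common element of images on adjacent levels is forced into the ``old'' middle part (hence into $I\cap J=\emptyset$), and send the universal vertex $Z$ to a fixed set of the opposite parity class from the level-$1$ canonical parts. The independence inequality $a_{j+1}\ge a_1+2j$ on $C_{n-1}$ and the slack $n>2k+1$ are indeed the two ingredients that make this work.

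However, the proposal is incomplete precisely where you say it is: you never define $f(v,i)$ for $1\le i\le k-1$, describing it only as ``a rule'' and calling its design ``the hard part.'' That design is the content of the lemma, and the extreme case $a_1=1$ that you flag genuinely breaks the naive version of the rule -- not just the disjointness checks, but well-definedness. Concretely, the paper sets, for $I=\{a_1<\dots<a_k\}$,
\begin{align*}
  f\bigl((I,\,k-2i)\bigr) &= \{1,3,\dots,2i-1\}\cup\{a_{i+1},\dots,a_{k-i}\}\cup\{n-2i+1,n-2i+3,\dots,n-1\},\\
  f\bigl((I,\,k-2i-1)\bigr) &= \{2,4,\dots,2i\}\cup\{a_{i+1},\dots,a_{k-i-1}\}\cup\{n-2i,n-2i+2,\dots,n\},
\end{align*}
but if $a_1=1$ the second formula can produce a set that is not even a vertex of $SG(n,k)$: for $i=0$ it contains the $C_n$-adjacent pair $\{1,n\}$, and for $i\ge 1$ it can contain the adjacent pair $\{2i,2i+1\}$ (take $a_{i+1}=2i+1$). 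The paper therefore modifies the rule when $a_1=1$, replacing $a_{i+1}$ by $a_{k-i}$, and this exceptional branch must be threaded through every subsequent disjointness check, including the one against $f(Z)$. Your sketch does not supply a rule of this precision or handle the exception, so it restates the problem rather than solving it.
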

\begin{proof}
  We will explicitly describe a homomorphism $f$ from $M_k(SG(n-1,k))$
  to $SG(n,k)$. Let $I$ be a vertex of $SG(n-1,k)$ and let $\cp I
  1,\dots,\cp I k$ be its copies in $M_k(SG(n-1,k))$. Furthermore,
  let $Z$ be the universal vertex of $M_k(SG(n-1,k))$.

  Suppose that $I = \Setx{a_1,\dots,a_k}$, where $a_1 < \dots <
  a_k$. We first define $f(\cpx I {k - 2i})$, where $0 \leq i <
  k/2$. The image is obtained from $I$ by replacing the first $i$
  elements by the $i$ least odd numbers, and replacing the last $i$
  elements by the arithmetic progression of length $i$ and step 2
  ending with $n-1$. In symbols,
  \begin{align*}
    f(\cpx I {k-2i}) = &\{1,3,\dots,2i-1,\\
      &a_{i+1},a_{i+2},\dots,a_{k-i},\\
      &n-2i+1,n-2i+3,\dots,n-1\}.
  \end{align*}
  In particular, $\cp I k$ is mapped to $I$. Since, clearly, $a_{i+1}
  \geq 2i+1$ and $a_{k-i} \leq n-2i-1$, we find that $f(\cpx I
  {k-2i})$ is a vertex of $SG(n,k)$.

  Next, we define $f(\cpx I {k-2i-1})$, where $0 \leq i \leq k/2-1$. In
  the case that $a_1 > 1$, we set
  \begin{align*}
    f(\cpx I {k-2i-1}) = &\{2,4,\dots,2i,\\
      &a_{i+1},a_{i+2},\dots,a_{k-i-1},\\
      &n-2i,n-2i+2,\dots,n\}.
  \end{align*}
  The definition for the case $a_1 = 1$ is almost the same, except
  that the element $a_{i+1}$ is replaced by $a_{k-i}$.

  To complete the definition, it remains to set
  \begin{equation*}
    f(Z) =
    \begin{cases}
      \Setx{1,3,\dots,k-1,n-k+1,n-k+3,\dots,n-1} & \text{if $k$ is even,}\\
      \Setx{2,4,\dots,k-1,n-k+1,n-k+3,\dots,n-2,n} & \text{if $k$ is odd.}
    \end{cases}
  \end{equation*}

  Note that the image of $f$ is contained in the vertex set of
  $SG(n,k)$. To verify that $f$ is a homomorphism, we consider a pair
  of adjacent vertices of $M_k(SG(n-1,k))$ and show that their images
  are disjoint (that is, adjacent in $SG(n,k)$).

  First, consider vertices $\cp I {k-2i}$ and $\cp J {k-2i-1}$, where
  $I=\Setx{a_1,\dots,a_k}$ and $J=\Setx{b_1,\dots,b_k}$ are disjoint
  and $0 \leq i < k/2-1$. Suppose that $x\in f(\cpx I {k-2i})\cap
  f(\cpx J {k-2i-1})$. It follows that $x \geq 2i+1$ (as all the
  smaller elements of $f(\cpx I {k-2i})$ are odd and all the smaller
  elements of $f(\cpx J {k-2i-1})$ are even). Similarly, $x\leq
  n-2i-1$. Then, however, the definition implies that $x\in I\cap J$,
  a contradiction.

  A similar argument works for the vertices $\cp I {k-2i-1}$ and $\cp
  J {k-2i-2}$ ($0 \leq i \leq k/2-2$). Thus, the only remaining case
  is the pair $Z$ and $\cp I 1$. Suppose that $k$ is even. By the
  definition,
  \begin{align*}
    f(\cpx I 1) &= \Setx{2,4,\dots,k-2,a_{k/2},n-k+2,n-k+4,\dots,n},\\
    f(Z) &= \Setx{1,3,\dots,k-1,n-k+1,n-k+3,\dots,n-1}.
  \end{align*}
  Since $a_{k/2}\notin\Setx{k-1,n-k+1}$, we find $f(\cpx I 1)\cap f(Z)
  = \emptyset$. An analogous argument for odd $k$ completes the proof
  that $f$ is a homomorphism.
\end{proof}

\begin{proof}[Proof of Theorem~\ref{thm:schrijver}]
  By Lemma~\ref{l:homo}, $M_k(SG(n-1,k))$ is homomorphic to
  $SG(n,k)$. This implies that $M_k(M_k(SG(n-2,k))$ is homomorphic to
  $SG(n,k)$ since, in general, any graph homomorphism from $H$ to $H'$
  determines a homomorphism from $M_k(H)$ to $M_k(H')$. 

  Continuing, we find that the graph
  \begin{equation}\label{eq:iterated}
    M_k(M_k(\ldots M_k(SG(2k+1,k)) \ldots )),
  \end{equation}
  where $M_k(\cdot)$ is applied $n-2k-1$ times, is homomorphic to
  $SG(n,k)$. Since $SG(2k+1,k)$ quadrangulates $P^1$,
  Theorem~\ref{thm:mycielski} implies that \eqref{eq:iterated} is a
  non-bipartite quadrangulation of $P^{n-2k}$.
\end{proof}

\section{Conclusion}
\label{sec:conclusion}

We conclude the paper with some open questions. In relation to
Schrij\-ver graphs, it appears likely that Theorem~\ref{thm:schrijver}
can be strengthened as follows:
\begin{conjecture}
  For $n > 2k \geq 2$, the Schrijver graph $SG(n,k)$ contains a
  non-bipartite quadrangulation of $P^{n-2k}$ as a spanning subgraph. 
\end{conjecture}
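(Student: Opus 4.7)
The plan is to proceed by induction on $m = n - 2k \geq 1$.

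\textbf{Base case} ($m = 1$): A direct calculation shows that $SG(2k+1, k)$ is isomorphic to the odd cycle $C_{2k+1}$: for each $i \in \ZZ_{2k+1}$, letting $v_i = \{i, i+2, \dots, i+2k-2\} \pmod{2k+1}$, one checks that $v_i$ is disjoint from exactly the two vertices $v_{i-1}$ and $v_{i+1}$. Since $C_{2k+1}$ is itself a non-bipartite spanning quadrangulation of $P^1 \cong S^1$, the base case holds.

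\textbf{Inductive step}: Assume a non-bipartite spanning quadrangulation $Q_{n-1} \subseteq SG(n-1,k)$ of $P^{m-1}$ has been constructed. Via Lemma~\ref{l:quad-ball}, represent $Q_{n-1}$ by a boundary-symmetric triangulation $T_{n-1}$ of $B^{m-1}$ together with a proper boundary-antisymmetric $2$-colouring. Since $V(SG(n-1,k)) \subseteq V(SG(n,k))$ under the natural inclusion of subsets of $[n-1]$ into subsets of $[n]$, one retains $T_{n-1}$ and tries to build a boundary-symmetric triangulation $T_n$ of $B^m$ whose additional vertices are precisely the elements of $V(SG(n,k)) \setminus V(SG(n-1,k))$ -- that is, the $k$-subsets of $[n]$ containing $n$, together with those containing both $1$ and $n-1$. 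The $2$-colouring must extend properly, and each new maximal simplex must induce a complete bipartite subgraph of $SG(n,k)$, i.e., correspond to a cross-disjoint pair of intersecting families of $k$-subsets. Structurally the construction should mimic the simplicial Mycielski construction of Theorem~\ref{thm:mycielski}: extrude $T_{n-1}$ inward toward a newly added central vertex, building several concentric layers whose vertices are the new elements of $V(SG(n,k))$.

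\textbf{Main obstacle}: The Mycielski-type construction of Theorem~\ref{thm:mycielski} freely introduces $r$ fresh copies of every vertex, whereas here the admissible new vertices are dictated by the Schrijver graph itself and are comparatively few; moreover, their adjacencies are constrained by the disjointness structure of $SG(n,k)$. The heart of the proof is therefore a combinatorial assignment of the genuine new vertices of $SG(n,k)$ to the layers of the extrusion, in such a way that every new maximal simplex $\sigma$ has the property that the black and white vertices of $\sigma$ are pairwise-intersecting families whose members are cross-disjoint. A promising route is to use the explicit homomorphism $f$ of Lemma~\ref{l:homo} as a template for this assignment (so that the quadrangulation refines the image of $f$), or to exploit the cyclic $\ZZ_n$-symmetry of $SG(n,k)$ in combination with shellability results for the independence complex of $C_n$ (as developed by Bj\"orner and de Longueville). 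Additional guidance may come from the fact, noted after Theorem~\ref{thm:main}, that any non-bipartite quadrangulation of $P^m$ must contain rich complete bipartite configurations in every proper colouring, which constrains the possible combinatorial structures.
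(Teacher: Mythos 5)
This statement is posed as an open \emph{conjecture} in the paper; there is no proof in the paper to compare against, and indeed the surrounding text makes clear that the authors could only establish the weaker Theorem~\ref{thm:schrijver}, where the quadrangulation is merely \emph{homomorphic} to $SG(n,k)$ rather than a spanning subgraph of it. Your write-up is not a proof either: it is an induction whose base case is correct but whose inductive step is an unexecuted plan.

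Concretely, the base case is fine --- $SG(2k+1,k)\cong C_{2k+1}$ is a standard fact, and $C_{2k+1}$ is trivially a non-bipartite spanning quadrangulation of $P^1\cong S^1$. Your identification of the new vertices of $SG(n,k)$ relative to $SG(n-1,k)$ (those $k$-subsets containing $n$, plus those containing both $1$ and $n-1$) is also correct, since $SG(n-1,k)$ sits inside $SG(n,k)$ as an induced subgraph under the inclusion $[n-1]\subset[n]$. But at the crucial point --- constructing a boundary-symmetric triangulation of $B^m$ extending $T_{n-1}$ whose new vertices are exactly $V(SG(n,k))\setminus V(SG(n-1,k))$, whose $2$-colouring is proper, and whose maximal simplices induce complete bipartite subgraphs of $SG(n,k)$ --- you give no construction. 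You explicitly label this the ``main obstacle'' and list several ``promising routes,'' which is an honest acknowledgement that the argument is open precisely where it matters. This is the genuine gap: without a concrete simplicial extension and a verification that the cross-disjointness constraints of $SG(n,k)$ are met on every new maximal simplex, the inductive step does not go through. Note also that the Mycielski-type extrusion of Theorem~\ref{thm:mycielski} cannot be used off-the-shelf, because it manufactures $r$ fresh copies of \emph{every} vertex, while here the available vertices and their adjacencies are rigidly prescribed by $SG(n,k)$; reconciling these two constraints is exactly what the conjecture asks for, and your proposal leaves it unresolved. In short, what you have written is a reasonable research plan (and consistent in spirit with the paper's machinery --- Lemma~\ref{l:quad-ball}, Lemma~\ref{l:homo}, Theorem~\ref{thm:mycielski}), but it does not constitute a proof of the conjecture.
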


The result of Youngs concerning quadrangulations of $P^2$ was extended
to arbitrary non-orientable surfaces by Archdeacon et al.~\cite{AHNNO01} and
by Mohar and Seymour~\cite{MohSey02} using the notion of odd quadrangulation. 

\begin{problem}
  Is a similar generalisation possible in the higher-dimensional case?
\end{problem}

An interesting question posed by a referee is whether the geometric condition
in Proposition~\ref{prop:borsuk} could be replaced by a condition on the odd
girth of the quadrangulation (the length of the shortest odd cycle). In
particular:

\begin{problem}
  Is there a constant $g$ such that every quadrangulation
  of $P^n$ of odd girth at least $g$ is $(n+2)$-chromatic?
\end{problem}

\section*{Acknowledgments}

We thank the anonymous referees for their helpful comments and
suggestions.

\bibliographystyle{plain}
\bibliography{projective}

\end{document}